\newtheorem{theorem}{Theorem}[section]
\newtheorem{corollary}[theorem]{Corollary}
\newtheorem{proposition}[theorem]{Proposition}
\theoremstyle{definition}
\newtheorem{definition}[theorem]{Definition}
\newtheorem{example}[theorem]{Example}
\theoremstyle{remark}
\newtheorem{remark}{Remark}
\begin{document}
\title{Fractional slice regular functions of a quaternionic variable}
\small{
\author {Jos\'e Oscar Gonz\'alez-Cervantes$^{(1)}$, Juan Bory-Reyes$^{(2)\footnote{corresponding author}}$,\\and\\ Irene Sabadini$^{(3)}$}
\vskip 1truecm
\date{\small $^{(1)}$ Departamento de Matem\'aticas, ESFM-Instituto Polit\'ecnico Nacional. 07338, Ciudad M\'exico, M\'exico\\ Email: jogc200678@gmail.com\\$^{(2)}$ {SEPI, ESIME-Zacatenco-Instituto Polit\'ecnico Nacional. 07338, Ciudad M\'exico, M\'exico}\\ Email: juanboryreyes@yahoo.com \\$^{(3)}$ Politecnico di Milano, Dipartimento di Matematica, Via E. Bonardi 9, 20133
Milano, Italy\\ Email: irene.sabadini@polimi.it
}
\maketitle
\begin{abstract}
The theory of slice regular functions of a quaternionic variable on the unit ball of the quaternions was introduced by Gentili and Struppa in 2006 and nowadays it is a well established function theory, especially in view of its applications to operator theory.

In this paper, we introduce the notion of fractional slice regular functions of a quaternionic variable defined as null-solutions of a fractional Cauchy-Riemann operators. We present a fractional Cauchy-Riemann operator in the sense of Riemann-Liouville and then in the sense of Caputo, with orders associated to an element of $(0,1)\times \mathbb R \times (0,1)\times \mathbb R$ for some axially symmetric slice domains which are new in the literature. We prove a version of the representation theorem, of the splitting lemma and we discuss a series expansion.

\end{abstract}
\noindent
\textbf{Keywords and Phrases.} Quaternionic analysis; Cauchy-Riemann operator; Slice regular functions; Riemann-Liouville and Caputo derivatives.\\
\textbf{Mathematics Subject Classification:} 26A33, 30A05, 30E20, 30G35, 32A30.

\section{Introduction}
The main purpose of this paper is to combine the fractional calculus with the theory of slice regular functions of a quaternionic variable.

Fractional calculus, albeit a synonym of fractional integrals and derivatives is nowadays a well-studied tool in several branches of science and engineering, see \cite{KST, OS, O, P, MR, SKM}. Fractional integrals and derivatives are nearly as old as the conventional calculus, in fact in 1695 de L'Hopital and Leibnitz mentioned already the notion of half derivative, see \cite{Le}. However a rigorous investigation was firstly carried out in the 19th century by Liouville in a series of reports of the Ecole Polytechnique in Paris in 1832-1837, see for instance \cite{L}. A decade later, Riemann in 1847 \cite{R}, described an integral-based Riemann-Liouville fractional integral operator \cite{CC}. For a brief history of the fractional calculus we refer the reader to \cite{Ro}.

The development of a fractional hyperholomorphic function theory (for functions with values in the quaternions or more in general in a Clifford algebra) is a recent field of research, see \cite{GB-1, GB-2, CTOP, DM, FRV, KV, PBBB, V} and the references therein. In particular, the interest for considering fractional Laplace and Dirac Operator is addressed in \cite{Ba, Be, FV1, FV2}.

However, in the hypercomplex setting one can consider another framework, namely that of slice hyperholomorphic functions. These functions were introduced in 2006 by Gentili and Struppa, inspired by a paper by Cullen, in the case of quaternionic valued functions defined on the unit ball  $\mathbb B^4(0,1)$ of the quaternions, identified with $\mathbb R^4$. Functions slice regular are defined as functions whose restriction to any complex plane is in the kernel of the corresponding Cauchy-Riemann operator.  The theory was then generalized from $\mathbb B^4(0,1)$ to more general domains, the so-called axially symmetric slice domains, see \cite{CSS,GenSS} and the references therein.

A great impulse for the theory   was given by the applications of this class of functions to operator theory, see \cite{CSS,ACS}.  More recently, the theory has been extended to even more general domains dropping the condition of axial symmetry, see \cite{GSto}, or even in greater generality using a new topology, the so-called slice topology, instead of the Euclidean one, see \cite{DJRS, DRS}. For a general account of the theory we refer the reader to \cite{CSS,CSS2,GenSS},
 and the references therein. It is interesting to note that fractional powers of quaternionic operators have been under study since the paper \cite{CG1} and the theory has been further developed also in the case of vector operators, see \cite{CG2}.

In this paper, we introduce a notion of fractional slice regular functions of a quaternionic variable on some axially symmetric slice domains. These functions are such that their restrictions to any complex plane are null-solutions of a suitably defined fractional Cauchy-Riemann operator. We shall consider two fractional Cauchy-Riemann operators: firstly, that one in the sense of Riemann-Liouville and, secondly, in the sense of Caputo. In both cases, the operators are with orders associated with an element of $(0,1)\times \mathbb R \times (0,1)\times \mathbb R$. We then prove a version of the representation formula, of the splitting lemma in this framework.

The paper is organized as follows. After this introduction, in Section 2 we briefly recall basic concepts and facts on fractional derivatives of complex order in the sense of Riemann-Liouville and of Caputo. Section 3 contains a few essential definitions and results about the quaternionic slice regular functions. Section 4 contains the notion of fractional Riemann-Liouville slice regular function and some results among which a splitting lemma and a representation formula valid for this class of functions. Section 5 discusses the case of fractional Caputo slice regular functions.

\section{Riemann-Liouville and Caputo fractional derivatives and integrals}\label{1}
In this section we provide a brief background of the necessary material on the two most popular definitions of fractional derivatives: Riemann-Liouville and Caputo fractional derivatives and integrals. We refer the reader to e.g. \cite{MR} for more details.

Set $a, b \in \mathbb R$    {and  $\alpha \in \mathbb C$ such that $0<\Re(\alpha)< 1$} and $a < b$. We do not discuss here the properties that a function $f$ needs to satisfy in order that the material in this section is valid and we shall work by requiring some necessary hypotheses (not necessarily the most general ones). Thus we suppose that $f \in L^1([a, b], \mathbb R)$ and that it is identically equal to zero outside the interval $[a, b]$.

The left and the right Riemann-Liouville integrals of order $\alpha$ of $f$ are defined, respectively, as follows:
$$({\bf I}_{a^+}^{\alpha} f)(x) := \frac{1}{\Gamma(\alpha)} \int_a^x \frac{f(\tau)}{(x-\tau)^{1-\alpha}} d\tau, \ \textrm{with} \ x > a$$
and
$$({\bf I}_{b^-}^{\alpha} f)(x) := \frac{1}{\Gamma(\alpha)} \int_x^b \frac{f(\tau)}{(\tau-x)^{1-\alpha}} d\tau, \ \textrm{with} \ x < b.$$
\begin{definition}
Let $f\in AC^1([a, b], \mathbb R)$ denote the class of all continuously differentiable functions and absolutely continuous on $[a, b]$. The fractional derivatives in the Riemann-Liouville sense, on the left and on the right, are defined by
\begin{align}\label{FracDer}
({_{RL}}D _{a^+}^{\alpha} f)(x):= \frac{d}{dx} \left[ ({\bf I}_{a^+}^{1-\alpha} f)(x)\right]
\end{align}
and
\begin{align} \label{FracDer1}
({_{RL}}D _{b^-}^{\alpha} f)(x):= (-1)\frac{d}{dx}\left[({\bf I}_{b^-}^{1-\alpha}f)(x)\right],
\end{align}
respectively.
\end{definition}
\begin{remark}
It is worthwhile noting that the derivatives in (\ref{FracDer}), (\ref{FracDer1}) do exist for $f\in AC^1([a, b], \mathbb R)$.
\end{remark}
\begin{remark}
The Fundamental Theorem for the Riemann-Liouville fractional calculus, \cite{CG}, shows that
\begin{align}\label{FundTheorem}
({_{RL}}D_{a^+}^{\alpha} {\bf I}_{a^+}^{\alpha}f)(x)=f(x) \\
 \ ({_{RL}}D _{b^-}^{\alpha}  {\bf I}_{b^-}^{\alpha} f)(x) = f(x).
\end{align}
\end{remark}
\begin{remark}
Let us mention an important feature of the fractional Riemann-Liouville derivative, namely the violation of the Leibniz rule. This is a characteristic property of the derivatives of non-integer orders, see \cite{Tarasov}.
\end{remark}
The following result is from \cite[pag. 1835]{VTRMB} and illustrate how the Riemann-Liouville fractional derivative acts on some special polynomials:
\begin{proposition}\label{Prop2.2} We have the following identities
\begin{align}\label{cte}
( {_{RL}}D _{a^+}^{\alpha} 1)(x)= &  \frac{(x-a)^{-\alpha}}{\Gamma[1-\alpha]}, \nonumber \\
  {_{RL}}D _{a^+}^{\alpha} (x-a)^{\beta} = &  \frac{\Gamma(\beta+1) (x-a)^{\beta -\alpha}}{\Gamma[\beta+1-\alpha]}, \quad \Re(\beta)>  -1
 \end{align}
for all $x\in [a, b]$. .
\end{proposition}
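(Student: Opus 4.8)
The plan is to note first that the initial identity is the special case $\beta=0$ of the second one (indeed $\Gamma(1)=1$ and $\Re(0)=0>-1$), so that it suffices to establish
\[
{_{RL}}D_{a^+}^{\alpha}(x-a)^{\beta}=\frac{\Gamma(\beta+1)\,(x-a)^{\beta-\alpha}}{\Gamma(\beta+1-\alpha)},\qquad \Re(\beta)>-1 .
\]
By the definition (\ref{FracDer}) this means I have to compute the Riemann-Liouville integral ${\bf I}_{a^+}^{1-\alpha}(x-a)^{\beta}$ and then differentiate the result once in $x$. Since $x>a$ on the interval of interest, all complex powers $(x-a)^{\gamma}$ are understood as $e^{\gamma\log(x-a)}$ with the real logarithm, so the manipulations with $\Gamma$ below are the classical ones.

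For the first step I would use the change of variable $\tau=a+t(x-a)$, $t\in[0,1]$, in
\[
({\bf I}_{a^+}^{1-\alpha}(\cdot-a)^{\beta})(x)=\frac{1}{\Gamma(1-\alpha)}\int_a^x\frac{(\tau-a)^{\beta}}{(x-\tau)^{\alpha}}\,d\tau ,
\]
which gives
\[
\frac{(x-a)^{\beta-\alpha+1}}{\Gamma(1-\alpha)}\int_0^1 t^{\beta}(1-t)^{-\alpha}\,dt=\frac{(x-a)^{\beta-\alpha+1}}{\Gamma(1-\alpha)}\,B(\beta+1,1-\alpha).
\]
The Euler integral of the first kind converges exactly because $\Re(\beta)>-1$ and $\Re(1-\alpha)>0$, the latter being the running assumption $0<\Re(\alpha)<1$; inserting $B(\beta+1,1-\alpha)=\Gamma(\beta+1)\Gamma(1-\alpha)/\Gamma(\beta+2-\alpha)$ yields
\[
({\bf I}_{a^+}^{1-\alpha}(\cdot-a)^{\beta})(x)=\frac{\Gamma(\beta+1)}{\Gamma(\beta+2-\alpha)}\,(x-a)^{\beta-\alpha+1}.
\]
Applying $\frac{d}{dx}$ as prescribed by (\ref{FracDer}) --- legitimate since the right-hand side is an ordinary power function, smooth on $(a,b]$ --- produces
\[
{_{RL}}D_{a^+}^{\alpha}(x-a)^{\beta}=\frac{\Gamma(\beta+1)}{\Gamma(\beta+2-\alpha)}\,(\beta-\alpha+1)\,(x-a)^{\beta-\alpha},
\]
and the functional equation $\Gamma(\beta+2-\alpha)=(\beta+1-\alpha)\Gamma(\beta+1-\alpha)$ turns this into the asserted formula. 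Setting $\beta=0$ recovers the first identity.

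I do not expect a serious obstacle here: the whole argument is a single Beta-function evaluation combined with the functional equation of $\Gamma$. The one point deserving a word is that for $\Re(\beta)\le 0$ the function $(x-a)^{\beta}$ is not in the class $AC^1([a,b],\mathbb R)$ for which (\ref{FracDer}) was originally stated; I would simply interpret (\ref{FracDer}) literally as the composition $\frac{d}{dx}\circ{\bf I}_{a^+}^{1-\alpha}$, which is meaningful precisely because, as the computation shows, ${\bf I}_{a^+}^{1-\alpha}(x-a)^{\beta}$ is again a genuine power function, differentiable on $(a,b]$. The hypothesis $\Re(\beta)>-1$ is exactly what guarantees the absolute convergence of the fractional integral (equivalently, of the Euler integral), so it cannot be dispensed with.
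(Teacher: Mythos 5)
Your computation is correct and complete: the Beta-function evaluation of ${\bf I}_{a^+}^{1-\alpha}(\cdot-a)^{\beta}$, the convergence condition $\Re(\beta)>-1$ together with $\Re(1-\alpha)>0$, and the reduction via $\Gamma(\beta+2-\alpha)=(\beta+1-\alpha)\Gamma(\beta+1-\alpha)$ are exactly the classical argument. The paper itself offers no proof of this proposition --- it simply imports the identities from \cite{VTRMB} --- so your write-up supplies a derivation that is absent from the text; your closing remark about reading \eqref{FracDer} literally as $\frac{d}{dx}\circ{\bf I}_{a^+}^{1-\alpha}$ when $(x-a)^{\beta}\notin AC^1([a,b],\mathbb R)$ is a sensible and standard clarification.
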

The second notion of fractional derivative that we consider in this paper is the following:
\begin{definition}
Let $f\in AC^1([a, b], \mathbb R)$.
The fractional derivatives in the Caputo sense, on the left and on the right,  are defined by:
\begin{align}\label{FracDerCaputo}
({_{\mathcal C} }D _{a^+}^{\alpha} f)(x):= ({\bf I}_{a^+}^{1-\alpha}   \frac{d f}{dx} )   (x)
\end{align}
and
\begin{align} \label{FracDer1Caputo}
({}_{\mathcal C}D _{b^-}^{\alpha} f)(x):= (-1)({\bf I}_{b^-}^{1-\alpha}\frac{df}{dx})(x),
\end{align}
respectively.
\end{definition}
\begin{remark}
As in the case of Riemann-Liouville fractional derivatives, from the Fundamental Theorem we have (see \cite{CG}):
\begin{align}\label{FundTheoremCaputo}
({}_{\mathcal C} D_{a^+}^{\alpha} {\bf I}_{a^+}^{\alpha}f)(x)=f(x) \\  ({}_{\mathcal C} D _{b^-}^{\alpha}  {\bf I}_{b^-}^{\alpha} f)(x) = f(x).
\end{align}
\end{remark}
The link between the two fractional derivatives that we have introduced is the following (see  (53) in \cite[Remark 3]{VTRMB}):
\begin{proposition} We have the following identities
\begin{align}\label{RLandC}
{}_{\mathcal C} D _{a^+}^{\alpha} f (x) = {_{RL}} D _{a^+}^{\alpha} (f- f(a)) (x),  \  \forall x\in [a,b],
\end{align}
for $0< \Re (\alpha) <1$.
\end{proposition}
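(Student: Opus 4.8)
The plan is to reduce the identity to the very definitions of the two fractional derivatives plus a single integration by parts. Put $g := f - f(a)$. Since $f \in AC^1([a,b],\mathbb R)$ and $f(a)$ is a constant, we have $g \in AC^1([a,b],\mathbb R)$, with $g' = f'$ on $[a,b]$ and, crucially, $g(a) = 0$. By linearity of the Riemann--Liouville integral and the definition (\ref{FracDer}) we get ${_{RL}}D_{a^+}^{\alpha}(f-f(a))(x) = \frac{d}{dx}\big[({\bf I}_{a^+}^{1-\alpha} g)(x)\big]$, so the assertion (\ref{RLandC}) amounts to proving $\frac{d}{dx}\big[({\bf I}_{a^+}^{1-\alpha} g)(x)\big] = ({\bf I}_{a^+}^{1-\alpha} f')(x)$, the right-hand side being $({}_{\mathcal C}D_{a^+}^{\alpha} f)(x)$ by (\ref{FracDerCaputo}).

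First I would write out $({\bf I}_{a^+}^{1-\alpha} g)(x) = \frac{1}{\Gamma(1-\alpha)}\int_a^x (x-\tau)^{-\alpha} g(\tau)\,d\tau$ and integrate by parts in $\tau$, differentiating $g$ and taking the antiderivative $-\frac{(x-\tau)^{1-\alpha}}{1-\alpha}$ of $(x-\tau)^{-\alpha}$. Because $\Re(1-\alpha) > 0$ the boundary term at $\tau = x$ vanishes, and the one at $\tau = a$ vanishes because $g(a) = 0$; the hypothesis $g \in AC^1$ guarantees that $g'\in L^1([a,b])$, so that all integrals converge and the integration by parts is legitimate. Using $\Gamma(2-\alpha) = (1-\alpha)\Gamma(1-\alpha)$, this yields
$$({\bf I}_{a^+}^{1-\alpha} g)(x) = \frac{1}{\Gamma(2-\alpha)}\int_a^x (x-\tau)^{1-\alpha} g'(\tau)\,d\tau = ({\bf I}_{a^+}^{2-\alpha} g')(x).$$

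Next I would differentiate this in $x$. Since $\Re(2-\alpha) > 1$, differentiation under the integral sign is permitted (the boundary contribution from the Leibniz rule again vanishes because the integrand carries the factor $(x-\tau)^{1-\alpha}$ evaluated at $\tau=x$), giving the elementary identity $\frac{d}{dx}({\bf I}_{a^+}^{2-\alpha} g')(x) = ({\bf I}_{a^+}^{1-\alpha} g')(x)$. Combining this with the previous display and with $g' = f'$ produces $\frac{d}{dx}\big[({\bf I}_{a^+}^{1-\alpha} g)(x)\big] = ({\bf I}_{a^+}^{1-\alpha} f')(x) = ({}_{\mathcal C}D_{a^+}^{\alpha} f)(x)$, which is exactly (\ref{RLandC}). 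The right-sided statement, if needed, would follow from the same computation applied to ${\bf I}_{b^-}^{1-\alpha}$ together with the sign conventions in (\ref{FracDer1}) and (\ref{FracDer1Caputo}).

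The step I expect to demand the most care is the integration by parts: one must check that for complex $\alpha$ with $0<\Re(\alpha)<1$ the singular factor $(x-\tau)^{-\alpha}$ is integrable near $\tau=x$, that the boundary term at $\tau=x$ genuinely vanishes (this rests on $\Re(1-\alpha)>0$, not on $\alpha$ being real), and that the absolute continuity of $f$ is precisely what places $f'$ in $L^1$ and thereby validates the whole manipulation; once this is settled, the differentiation under the integral sign in the last step is routine given $\Re(2-\alpha)>1$. An alternative, more formal route is to combine Proposition \ref{Prop2.2} (which computes ${_{RL}}D_{a^+}^{\alpha} 1$) with the known decomposition ${_{RL}}D_{a^+}^{\alpha} f = {}_{\mathcal C}D_{a^+}^{\alpha} f + \frac{f(a)(x-a)^{-\alpha}}{\Gamma(1-\alpha)}$, but since the latter is itself essentially the content of (\ref{RLandC}), I would prefer the self-contained integration-by-parts argument above.
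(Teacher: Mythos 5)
Your argument is correct, but note that the paper does not actually prove this proposition: it simply quotes formula (53) of \cite{VTRMB}, so your integration-by-parts derivation is a genuine, self-contained substitute rather than a reproduction of the paper's reasoning. The reduction to $g=f-f(a)$ with $g(a)=0$ and $g'=f'$, the vanishing of both boundary terms (at $\tau=x$ because $\Re(1-\alpha)>0$ and $g$ is bounded, at $\tau=a$ because $g(a)=0$), and the resulting identity $({\bf I}_{a^+}^{1-\alpha}g)(x)=({\bf I}_{a^+}^{2-\alpha}g')(x)$ are all sound; since the paper's class $AC^1$ consists of continuously differentiable functions, $g'=f'$ is continuous and every integral you write converges. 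The one step I would tighten is the final differentiation: after differentiating the kernel you are left with $(1-\alpha)(x-\tau)^{-\alpha}g'(\tau)$, which is singular at $\tau=x$, so invoking ``differentiation under the integral sign because $\Re(2-\alpha)>1$'' is not quite an appeal to a standard theorem. The cleanest justification is to write ${\bf I}_{a^+}^{2-\alpha}g'={\bf I}_{a^+}^{1}\bigl({\bf I}_{a^+}^{1-\alpha}g'\bigr)$ by the semigroup property of Riemann--Liouville integrals and then apply the fundamental theorem of calculus to the outer ${\bf I}_{a^+}^{1}$, which is legitimate because ${\bf I}_{a^+}^{1-\alpha}g'$ is continuous. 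With that adjustment the proof is complete, and your remark that the alternative route through ${}_{RL}D_{a^+}^{\alpha}f={}_{\mathcal C}D_{a^+}^{\alpha}f+f(a)(x-a)^{-\alpha}/\Gamma(1-\alpha)$ would be circular is accurate.
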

\section{Preliminaries on slice regular functions of a quaternionic variable}
A real quaternion is an element of the form $q=x_0  + x_{1} {e_1} +x_{2} e_2 + x_{3} e_3$ where $x_0, x_1, x_2, x_3\in\mathbb R$ and  the imaginary units $e_1$, $e_2$, $e_3$ satisfy: $e_1^2=e_2^2=e_3^2=-1$,  $e_1e_2=-e_2e_1=e_3$, $e_2e_3=-e_3e_2=e_1$, $e_3e_1=-e_1e_3=e_2$.  Quaternions form a skew-field denoted by  $\mathbb H$. The sets $\{e_1,e_2,e_3\}$ and $\{1,e_1,e_2,e_3\}$ are called  the standard basis of $\mathbb R^3$ and   $\mathbb H$, respectively.

The vector part of $q\in \mathbb H$ is by definition, ${\bf{q}}:= x_{1} {e_1} +x_{2} e_2 + x_{3} e_3$ while its real part is $q_0:=x_0$.
The quaternionic conjugation of $q$, denoted by $\bar q$ is defined by $\bar q=:q_0-{\bf q} $ and norm the $q\in\mathbb H$ is given by
$$\|q\|:=\sqrt{x_0^2 +x_1^2+x_2^3+x_3^2}= \sqrt{q\bar q} = \sqrt{\bar q  q}.$$

The unit open ball in $\mathbb H$ is denoted by $\mathbb B^4(0,1):=\{q \in \mathbb H\ \mid \ \|q\|<1 \}$ and  the unit  spheres   in  $\mathbb R^3$, identified with the set of the quaternions which are purely imaginary,  and in  $\mathbb H$ are $\mathbb{S}^2:=\{{\bf q}\in\mathbb R^3  \mid \|{\bf q}\| =1  \}$    and  $\mathbb{S}^3:=\{ {  q}\in\mathbb H \mid  \|{  q}\| =1\}$, respectively.

The quaternionic structure allows us to see that every ${\bf i}\in \mathbb{S}^2$ satisfies the property ${\bf i}^{2}=-1$ and that the set $\mathbb{C}({\bf i}):=\{x+{\bf i}y; \ |\ x,y\in\mathbb{R}\}\cong \mathbb C$ as fields.

Note that each $q\in \mathbb H \setminus \mathbb R$ can be written as $x+ {\bf i}_q y $ where  $x,y\in \mathbb R$ and ${\bf i}_q:=\|  {\bf q}\|^{-1}{\bf q}\in \mathbb S^2$ if ${\bf q}$ is not the zero vector, for otherwise we have $q=x\in \mathbb R$. This observation leads to the simple but crucial fact that $\mathbb H$ can be seen as union of the complex planes $\mathbb{C}({\bf i})$, i.e.
$$
\mathbb H=\bigcup_{{\bf i}\in\mathbb{S}^2} \mathbb{C}({\bf i}).
$$
This observation leads to the following definition:
\begin{definition}
Let $\Omega\subset\mathbb H$ be an open set. A real differentiable function $f:\Omega\to \mathbb{H}$  is called left slice regular function, or slice regular function on $\Omega$,
if
\begin{align*}
\overline{\partial}_{{\bf i}}f\mid_{_{\Omega\cap \mathbb C({\bf i})}}:=\frac{1}{2}\left (\frac{\partial}{\partial x}+{\bf i} \frac{\partial}{\partial y}\right )f\mid_{_{\Omega\cap \mathbb C({\bf i})}}=0  \textrm{ on  $\Omega\cap \mathbb C({\bf i})$,}
\end{align*}
for all ${\bf i}\in \mathbb{S}^2$ and its derivative is $f'=\displaystyle {\partial}_{{\bf i}}f\mid_{_{\Omega\cap \mathbb C({\bf i})}} = \frac{\partial}{\partial x} f\mid_{_{\Omega\cap \mathbb C({\bf i})}}= \partial_xf\mid_{_{\Omega\cap \mathbb C({\bf i})}}$.
\end{definition}
The quaternionic right linear space of the  slice regular functions on $\Omega$ is denoted by $\mathcal{SR}(\Omega)$.
\\
We note that due to the noncommutativity of the quaternions one may define a notion of right slice regular functions by requiring
\begin{align*}
f\mid_{_{\Omega\cap \mathbb C({\bf i})}}\overline{\partial}_{{\bf i}}:=\frac{1}{2}\left (\frac{\partial}{\partial x}f\mid_{_{\Omega\cap \mathbb C({\bf i})}}+ \frac{\partial}{\partial y}f\mid_{_{\Omega\cap \mathbb C({\bf i})}}{\bf i}\right )=0  \textrm{ on  $\Omega\cap \mathbb C({\bf i})$,}
\end{align*}
for all ${\bf i}\in \mathbb{S}^2$. The function theory is different but fully equivalent to that on left slice regular functions.
\begin{definition}
A set $\Omega\subset\mathbb H$ is called axially symmetric if $x+{\bf i}y \in \Omega$ with $x,y\in\mathbb R$   then $\{x+{\bf j}y \ \mid  \ {\bf j}\in\mathbb{S}^2\}\subset \Omega$ and $\Omega\cap \mathbb R\neq \emptyset$.
A domain $\Omega\subset\mathbb H$ is a slice domain, or s-domain, if $\Omega_{\bf i} := \Omega\cap \mathbb C({\bf i})$ is a domain in $\mathbb C({\bf i})$ for all ${\bf i}\in\mathbb S^2$.
\end{definition}

The following results are well-known and form the basis for the function theory of slice regular functions for which we refer the reader to e.g.  \cite{CSS, GenSS}.
\begin{theorem}\label{1.3}
Let $\Omega \subset\mathbb{H}$ be an axially symmetric s-domain and  $f\in\mathcal{SR}(\Omega)$.

1. (Splitting Lemma) For every ${\bf i},{\bf j}\in \mathbb{S}$, orthogonal  to  each other, there exist $F,G\in Hol(\Omega_{\bf i})$, holomorphic functions from  $\Omega_{\bf i}$ to $\mathbb{C}({{\bf i}})$ such that
$$f_{\mid_{\Omega_{\bf i}}} = F + G{\bf j}\  \mbox{on}\  \Omega_{\bf i},$$
see \cite{CSS}.
%\item

2. (Representation Formula) For every  $q=x+{\bf i}_q y \in \Omega$ with $x,y\in\mathbb R$ and  ${\bf i}_q \in \mathbb S^2$ one has that
\begin{align}\label{RepreForm}
f(x+{\bf i}_q y) = \frac {1}{2}[   f(x+{\bf i}y)+ f(x-{\bf i}y)]
+ \frac {1}{2} {\bf i}_q {\bf i}[ f(x-{\bf i}y)- f(x+{\bf i}y)],
\end{align}
for all ${\bf i}\in \mathbb S^2$.
%\end{enumerate}
\end{theorem}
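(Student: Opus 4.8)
The plan is to prove the two parts in order, using the Splitting Lemma to establish the Representation Formula. \emph{For Part 1,} fix orthogonal ${\bf i},{\bf j}\in\mathbb S^2$ and put ${\bf k}:={\bf i}{\bf j}$, so that $\{1,{\bf i},{\bf j},{\bf k}\}$ is a real orthonormal basis of $\mathbb H$ and $\mathbb H=\mathbb C({\bf i})\oplus\mathbb C({\bf i}){\bf j}$ as a direct sum of real subspaces. Writing the real-differentiable function $f|_{\Omega_{\bf i}}=f_0+f_1{\bf i}+f_2{\bf j}+f_3{\bf k}$ with $f_0,\dots,f_3:\Omega_{\bf i}\to\mathbb R$ real-differentiable, set $F:=f_0+f_1{\bf i}$ and $G:=f_2+f_3{\bf i}$, both $\mathbb C({\bf i})$-valued; then $F+G{\bf j}=f|_{\Omega_{\bf i}}$. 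The core of the argument is to expand the hypothesis $0=\overline\partial_{\bf i}f|_{\Omega_{\bf i}}=\tfrac12(\partial_x+{\bf i}\partial_y)(F+G{\bf j})$, using that ${\bf i}$ commutes with the $\mathbb C({\bf i})$-valued derivative $\partial_yG$ to rewrite ${\bf i}(\partial_yG){\bf j}=\bigl((\partial_yG){\bf i}\bigr){\bf j}$, so that one obtains $0=(\partial_xF+{\bf i}\partial_yF)+(\partial_xG+{\bf i}\partial_yG){\bf j}$. The first summand lies in $\mathbb C({\bf i})$ and the second in $\mathbb C({\bf i}){\bf j}$; by directness of the sum both vanish, and $\partial_xF+{\bf i}\partial_yF=0=\partial_xG+{\bf i}\partial_yG$ are exactly the Cauchy--Riemann equations identifying $F,G\in Hol(\Omega_{\bf i})$.

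\emph{For Part 2,} fix ${\bf i}\in\mathbb S^2$ and define on $\Omega$
$$h(x+{\bf j}y):=\tfrac12\bigl[f(x+{\bf i}y)+f(x-{\bf i}y)\bigr]+\tfrac12\,{\bf j}{\bf i}\,\bigl[f(x-{\bf i}y)-f(x+{\bf i}y)\bigr].$$
First I would check that $h$ is well defined, i.e.\ invariant under $({\bf j},y)\mapsto(-{\bf j},-y)$: the first bracket is even in $y$ while the second changes sign together with the factor ${\bf j}$, so the expression is unchanged. Writing $h=A(x,y)+{\bf j}{\bf i}\,B(x,y)$ with $A$ even and $B$ odd in $y$, one then verifies that $h$ is real differentiable on $\Omega$ (the only delicate point, differentiability across $\Omega\cap\mathbb R$, being handled in the standard way). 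The main step is a direct slicewise Cauchy--Riemann computation: setting $u:=f(x+{\bf i}y)$ and $v:=f(x-{\bf i}y)$, the slice regularity of $f$ on $\Omega_{\bf i}$ gives $\partial_yu={\bf i}\,\partial_xu$ and, by the chain rule applied to $y\mapsto-y$, $\partial_yv=-{\bf i}\,\partial_xv$; substituting these into $\overline\partial_{\bf j}h|_{\Omega_{\bf j}}=\tfrac12\bigl(\partial_xh+{\bf j}\,\partial_yh\bigr)$ and simplifying with ${\bf i}^2={\bf j}^2=-1$, every term cancels, so $\overline\partial_{\bf j}h|_{\Omega_{\bf j}}=0$ for all ${\bf j}\in\mathbb S^2$, that is $h\in\mathcal{SR}(\Omega)$. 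Evaluating the definition of $h$ at ${\bf j}=\pm{\bf i}$ yields $h|_{\Omega_{\bf i}}=f|_{\Omega_{\bf i}}$.

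To conclude, set $g:=f-h\in\mathcal{SR}(\Omega)$; then $g$ vanishes on $\Omega_{\bf i}$, in particular on the nonempty real interval $\Omega\cap\mathbb R$ (nonempty because $\Omega$ is axially symmetric). For an arbitrary ${\bf j}\in\mathbb S^2$ with ${\bf j}\neq\pm{\bf i}$, apply Part 1 on the s-domain $\Omega$ with the slice $\Omega_{\bf j}$: $g|_{\Omega_{\bf j}}=P+Q{\bf l}$ with $P,Q\in Hol(\Omega_{\bf j})$ and ${\bf l}\perp{\bf j}$. Since $g$ vanishes on $\Omega\cap\mathbb R$ and the sum $\mathbb C({\bf j})\oplus\mathbb C({\bf j}){\bf l}$ is direct, both $P$ and $Q$ vanish on this real interval, which has accumulation points in the domain $\Omega_{\bf j}$; the complex identity theorem then forces $P\equiv Q\equiv0$, hence $g\equiv0$ on $\Omega_{\bf j}$. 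As $\mathbb H=\bigcup_{{\bf j}\in\mathbb S^2}\mathbb C({\bf j})$ and ${\bf j}$ was arbitrary, $g\equiv0$, i.e.\ $f=h$ on $\Omega$, which is precisely (\ref{RepreForm}) with ${\bf i}_q$ in the role of ${\bf j}$. The main obstacle I expect is the bookkeeping in the slicewise Cauchy--Riemann computation for $h$ --- tracking left and right multiplications by ${\bf i}$ and ${\bf j}$ under noncommutativity and correctly applying the chain rule to the reflection $y\mapsto-y$ --- together with the minor technical point of real differentiability of $h$ across the real axis.
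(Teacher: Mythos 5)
Your proof is correct. The paper states Theorem \ref{1.3} without proof, merely citing \cite{CSS,GenSS}; your argument --- the splitting via the direct sum $\mathbb H=\mathbb C({\bf i})\oplus\mathbb C({\bf i}){\bf j}$ forcing both Cauchy--Riemann systems to vanish separately, then defining the candidate $h$ from the right-hand side of \eqref{RepreForm}, verifying slicewise that $\overline\partial_{\bf j}h=0$ and that $h$ agrees with $f$ on $\Omega_{\bf i}$, and concluding $f\equiv h$ by applying the Splitting Lemma and the complex identity theorem on each slice through the real points of $\Omega$ --- is exactly the standard proof given in those references.
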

The class of slice regular functions contains polynomials in the quaternionic variable $q$ and convergent power series centered at $0$ or at a real point. Also the converse is true as it was shown in the original paper \cite{GenS2}:
\begin{theorem}
 Set $f\in \mathcal {SR} (\mathbb B^4(0,1))$  there exists a sequence of quaternions $(a_n)$ such that $f(q)= \sum_{n=0}^{\infty} q^n a_n$ for all $q\in\mathbb B^4(0,1)$.
\end{theorem}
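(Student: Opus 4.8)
The strategy is to reduce the statement to the classical one–variable theory by means of the Splitting Lemma, and then to lift the resulting slice-wise identity to the whole ball by means of the Representation Formula. First I would fix two orthogonal imaginary units ${\bf i},{\bf j}\in\mathbb S^2$ and observe that $\Omega_{\bf i}:=\mathbb B^4(0,1)\cap\mathbb C({\bf i})$ is precisely the open unit disc of the plane $\mathbb C({\bf i})\cong\mathbb C$, hence a symmetric domain containing the origin. By the Splitting Lemma (Theorem~\ref{1.3}, part~1) there exist $F,G\in Hol(\Omega_{\bf i})$, with values in $\mathbb C({\bf i})$, such that $f_{\mid_{\Omega_{\bf i}}}=F+G{\bf j}$ on $\Omega_{\bf i}$.

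Second, since $\Omega_{\bf i}$ is the unit disc, the holomorphic functions $F$ and $G$ admit Taylor expansions at the origin, say $F(z)=\sum_{n\geq 0}z^n\alpha_n$ and $G(z)=\sum_{n\geq 0}z^n\beta_n$ with $\alpha_n,\beta_n\in\mathbb C({\bf i})$, both series converging absolutely and uniformly on compact subsets of $\Omega_{\bf i}$; in particular $\limsup_n\|\alpha_n\|^{1/n}\leq 1$ and $\limsup_n\|\beta_n\|^{1/n}\leq 1$. Putting $a_n:=\alpha_n+\beta_n{\bf j}\in\mathbb H$ we obtain $f_{\mid_{\Omega_{\bf i}}}(z)=\sum_{n\geq 0}z^na_n$ on $\Omega_{\bf i}$, together with the estimate $\limsup_n\|a_n\|^{1/n}\leq 1$ (because $\{1,{\bf i},{\bf j},{\bf i}{\bf j}\}$ is orthonormal one has $\|a_n\|^2=\|\alpha_n\|^2+\|\beta_n\|^2$). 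Since $\|q^na_n\|=\|q\|^n\|a_n\|$ for every $q\in\mathbb H$, the quaternionic series $\sum_{n\geq 0}q^na_n$ then converges absolutely for each $q\in\mathbb B^4(0,1)$ and uniformly on every closed ball $\overline{\mathbb B^4(0,r)}$ with $0<r<1$; denote its sum by $g$.

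Third, I would verify that $g\in\mathcal{SR}(\mathbb B^4(0,1))$: restricting to an arbitrary slice $\mathbb C({\bf k})$, the local uniform convergence permits differentiation term by term, and each monomial $z\mapsto z^na_n$ lies in the kernel of $\overline{\partial}_{{\bf k}}$ since $z^n$ is holomorphic in $z\in\mathbb C({\bf k})$; hence $\overline{\partial}_{{\bf k}}\,g=0$ on $\mathbb B^4(0,1)\cap\mathbb C({\bf k})$ for all ${\bf k}\in\mathbb S^2$. Finally, $f$ and $g$ are both slice regular on the axially symmetric s-domain $\mathbb B^4(0,1)$ and they agree on $\Omega_{\bf i}$; for any $q=x+{\bf i}_qy\in\mathbb B^4(0,1)$ the two points $x\pm{\bf i}y$ belong to $\Omega_{\bf i}$, so the Representation Formula (\ref{RepreForm}) expresses both $f(q)$ and $g(q)$ through the common values $f(x\pm{\bf i}y)=g(x\pm{\bf i}y)$, forcing $f(q)=g(q)$. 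This yields $f(q)=\sum_{n\geq 0}q^na_n$ on $\mathbb B^4(0,1)$.

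The only genuinely delicate step is the second one, namely upgrading the convergence of the Taylor series on the single disc $\Omega_{\bf i}$ to absolute, locally uniform convergence of the quaternionic series $\sum q^na_n$ on the whole ball; the point is that the elementary identity $\|q^n\|=\|q\|^n$ makes the radius of convergence computed on one slice automatically control the behaviour on all slices. The rest is bookkeeping: the Splitting Lemma converts the problem into two ordinary complex Taylor expansions, while the Representation Formula guarantees that the slice-wise equality $f=g$ propagates to all of $\mathbb B^4(0,1)$.
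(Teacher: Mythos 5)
Your argument is correct. The paper itself does not prove this theorem: it is recalled from \cite{GenS2} without proof, so there is no internal argument to compare against. What you give is the standard modern derivation: the Splitting Lemma (Theorem \ref{1.3}, part 1) reduces the statement to two classical Taylor expansions on the disc $\Omega_{\bf i}$, the multiplicativity $\|q^n a_n\|=\|q\|^n\|a_n\|$ together with $\|a_n\|^2=\|\alpha_n\|^2+\|\beta_n\|^2$ transfers the radius of convergence from one slice to the whole ball, and the Representation Formula \eqref{RepreForm} propagates the identity $f=g$ from $\Omega_{\bf i}$ to $\mathbb B^4(0,1)$. All steps check out; the only point stated a little loosely is that term-by-term differentiation of $\sum q^na_n$ on a slice requires locally uniform convergence of the \emph{differentiated} series, which is automatic here since $\limsup_n\|a_n\|^{1/n}\le 1$ implies $\sum n r^{n-1}\|a_n\|<\infty$ for $r<1$ (equivalently, one can just split $g$ on each slice into two complex power series and invoke classical holomorphy). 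It is worth noting that the original proof in \cite{GenS2} predates the Representation Formula and instead identifies the coefficients directly as $a_n=\frac{1}{n!}\,\partial_x^nf(0)$, using that the slice derivatives at the real point $0$ are slice-independent; your route trades that coefficient-matching argument for the Representation Formula, which is cleaner given the toolkit the present paper has already recalled.
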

Finally, we recall that slice regular functions can be represented via a Cauchy integral formula with slice regular kernel:
\begin{theorem}
Let $\Omega \subseteq \mathbb{H}$ be an axially symmetric s-domain  such that
$\partial \Omega_{\bf i}$ is union of a finite number of
continuously differentiable Jordan curves, for every ${\bf i}\in\mathbb{S}^2$. Let $f$ be
a slice regular function on an open $U$ set containing $\overline{\Omega}$ and, for any ${\bf i}\in \mathbb{S}^2$,  set  $d\zeta_{{\bf i}}:=-{\bf i}d\zeta$.
Then for every $q\in \Omega$ we have:
\begin{equation}\label{integral}
 f(q)=\frac{1}{2 \pi}\int_{\partial \Omega_{{\bf i}}} S^{-1}(\zeta,q) d\zeta_{{\bf i}} f(\zeta),
\end{equation}
where $S^{-1}(\zeta, q)=-(q^2-2{\rm Re}(\zeta)q+|\zeta|^2)^{-1}
 (q-\overline{\zeta})$ is the slice regular Cauchy kernel.
Moreover,
the value of the integral depends neither on $U$ nor on the  imaginary unit
${\bf i}\in\mathbb{S}^2$.
\end{theorem}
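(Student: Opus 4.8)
The plan is to reduce the statement to the ordinary Cauchy integral formula on each complex slice and then to reassemble the slices by means of the Representation Formula (\ref{RepreForm}). First, fix ${\bf i}\in\mathbb S^2$ and $q=x+{\bf i}_qy\in\Omega$; the case $q\in\mathbb R$, where ${\bf i}_q$ is not defined, will be treated separately. Since $\Omega$ is axially symmetric, the points $w_{\pm}:=x\pm{\bf i}y$ lie in $\Omega_{\bf i}$. By the Splitting Lemma one writes $f|_{\Omega_{\bf i}}=F+G{\bf j}$ with $F,G\colon\Omega_{\bf i}\to\mathbb C({\bf i})$ holomorphic, and the hypothesis that $\partial\Omega_{\bf i}$ is a finite union of $C^1$ Jordan curves lets one apply the classical Cauchy formula in $\mathbb C({\bf i})$ to $F$ and to $G$, hence, by right $\mathbb H$-linearity, to $f|_{\Omega_{\bf i}}$ itself. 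Recalling $d\zeta_{\bf i}=-{\bf i}\,d\zeta$, this gives
$$
f(w_{\pm})=\frac1{2\pi}\int_{\partial\Omega_{\bf i}}(\zeta-w_{\pm})^{-1}\,d\zeta_{\bf i}\,f(\zeta),
$$
where $(\zeta-w_{\pm})^{-1}$ and $d\zeta$ lie in the commutative field $\mathbb C({\bf i})$ while $f(\zeta)$ must be kept on the right.

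Next I would substitute these two identities into (\ref{RepreForm}), rewritten in the grouped form $f(q)=\tfrac12(1-{\bf i}_q{\bf i})f(w_+)+\tfrac12(1+{\bf i}_q{\bf i})f(w_-)$, obtaining $f(q)=\frac1{2\pi}\int_{\partial\Omega_{\bf i}}K(\zeta,q)\,d\zeta_{\bf i}\,f(\zeta)$ with
$$
K(\zeta,q)=\tfrac12(1-{\bf i}_q{\bf i})(\zeta-w_+)^{-1}+\tfrac12(1+{\bf i}_q{\bf i})(\zeta-w_-)^{-1}.
$$
The core of the argument is then the algebraic identity $K(\zeta,q)=S^{-1}(\zeta,q)$ for every $\zeta\in\partial\Omega_{\bf i}$ (note $S^{-1}(\zeta,q)$ is well defined, since $q\in\Omega$ while $\zeta\in\partial\Omega_{\bf i}$ forces $q$ not to lie on the $2$-sphere through $\zeta$). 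To establish it, I would use $(\zeta-w_+)(\zeta-w_-)=\zeta^2-2\,{\rm Re}(q)\,\zeta+|q|^2=:\Delta\in\mathbb C({\bf i})$ to write $(\zeta-w_\pm)^{-1}=\Delta^{-1}(\zeta-w_\mp)$, expand $K$, cancel the cross terms using ${\bf i}_q^2={\bf i}^2=-1$ together with $w_++w_-=2x$ and $w_+-w_-=2{\bf i}y$, and finally compare the outcome with the explicit expansion of $-(q^2-2\,{\rm Re}(\zeta)\,q+|\zeta|^2)^{-1}(q-\overline\zeta)$, bearing in mind that $q^2-2\,{\rm Re}(\zeta)\,q+|\zeta|^2$ belongs to $\mathbb C({\bf i}_q)$ so that its inverse is obtained by conjugation. (Alternatively, one may note that $K(\zeta,\cdot)$ is, as a function of $q$, the slice regular extension of the holomorphic map $w\mapsto(\zeta-w)^{-1}$ on $\mathbb C({\bf i})$, which is exactly the characterizing property of $S^{-1}(\zeta,\cdot)$.) This proves (\ref{integral}).

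It remains to verify the two independence claims. Independence of ${\bf i}\in\mathbb S^2$ is immediate: the computation above produces, for each admissible ${\bf i}$, the same left-hand side $f(q)$, which makes no reference to ${\bf i}$; equivalently, (\ref{RepreForm}) is valid for every ${\bf i}$. Independence of $U$ holds because $\partial\Omega_{\bf i}$ and the values of $f$ on $\overline\Omega$ depend only on $\Omega$; one may moreover deform the contour within $U_{\bf i}\setminus\{w_+,w_-\}$ by Cauchy's theorem in $\mathbb C({\bf i})$ applied to the holomorphic restriction $f|_{U_{\bf i}}$. For $q\in\mathbb R$ the formula reduces to the classical Cauchy formula on any slice, since then $q\in\mathbb C({\bf i})$ and $S^{-1}(\zeta,q)=(\zeta-q)^{-1}$; alternatively it follows from the non-real case by continuity. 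The only genuinely computational point is the kernel identity $K(\zeta,q)=S^{-1}(\zeta,q)$, and the main obstacle there is bookkeeping of the noncommutative factors — keeping careful track of which quaternion multiplies on the left and which on the right.
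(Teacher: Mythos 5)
The paper does not actually prove this theorem: it is stated in the preliminaries as a recalled classical result, with the proof deferred to \cite{CSS, GenSS}. Your argument is essentially the standard proof from those references --- the Splitting Lemma reduces the Cauchy formula on a single slice $\Omega_{\bf i}$ to the classical one for the holomorphic components $F$ and $G$, the Representation Formula then propagates it off that slice, and the resulting kernel is identified with $S^{-1}(\zeta,q)$ --- and it is correct, including the treatment of real $q$ and the two independence claims. The only step I would push you to write out more carefully is the direct verification of $K(\zeta,q)=S^{-1}(\zeta,q)$: after writing $(\zeta-w_\pm)^{-1}=\Delta^{-1}(\zeta-w_\mp)$ with $\Delta=(\zeta-w_+)(\zeta-w_-)\in\mathbb C({\bf i})$, the factor $\Delta^{-1}$ does \emph{not} commute with ${\bf i}_q{\bf i}$, so the ``cancellation of cross terms'' must proceed via the grouping $K=\Delta^{-1}(\zeta-x)+y\,{\bf i}_q\,\Delta^{-1}$ and then a genuine comparison with $-(q^2-2{\rm Re}(\zeta)q+|\zeta|^2)^{-1}(q-\overline{\zeta})$, whose inverted factor lives in $\mathbb C({\bf i}_q)$ rather than $\mathbb C({\bf i})$. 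Your parenthetical alternative --- noting that $q\mapsto S^{-1}(\zeta,q)$ is itself slice regular away from the sphere through $\zeta$ and restricts to $(\zeta-q)^{-1}$ on $\mathbb C({\bf i})$, so that the Representation Formula applied to it produces exactly $K(\zeta,q)$ --- avoids this bookkeeping entirely and is the cleaner route.
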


\section{Fractional slice regular functions in the Riemann-Liouville sense}\label{2}

Let  $a, b, c\in\mathbb R$ with $a<b$, $c>0$ and let us consider the special class of axially symmetric s-domains defined by
$$S_{a , b,c }=\{ q=  x+ {\bf i}_q y \in \mathbb H  \  \mid  \ x \in [ a, b] ,  \ y \in [0,c], \  {\bf i}_q\in \mathbb S^2 \}.$$
Given ${\bf i}\in \mathbb S^2$, the intersection of $S_{a , b,c }$ with $\mathbb C({\bf i})$ is denoted by
$$S_{a , b,c,{\bf i} }=\{    x+ {\bf i} y \in \mathbb H  \  \mid  \ x \in [ a, b] ,  \ y \in [0,c] \}.$$
Note that $S_{a , b,c,{\bf i} }\subset \mathbb C^+({\bf i})$.
Fix $u\in [a,b]$ and $v\in [0,c]$ and let $\alpha:=(\alpha_0,\alpha_1)$ and $\beta:=(\beta_0,\beta_1)$ be pairs in $(0,1)\times\mathbb R$.
For $f \in C(S_{a , b,c }, \mathbb H)$ let us define
{\footnotesize
\begin{align*}
[{\bf I}_{a^+}^{1-\alpha_0- {\bf i} \alpha_1} f\mid_{S_{a , b,c,{\bf i} }}](x+{\bf i}v) := & \frac{1}{\Gamma(1-\alpha_0- {\bf i} \alpha_1)} \int_a^x \frac{f\mid_{S_{a , b,c,{\bf i} }}(\tau + {\bf i} v)}{(x-\tau)^{\alpha_0+ {\bf i} \alpha_1}} d\tau, \ \textrm{with} \ x > a, \\
[{\bf I}_{0^+}^{1-\beta_0- {\bf i} \beta_1} f\mid_{S_{a , b, c, {\bf i} }}](u+{\bf i}y) :=& \frac{1}{\Gamma(1-\beta_0- {\bf i} \beta_1)} \int_{0}^y \frac{f\mid_{S_{a , b,c, {\bf i} }}(u + {\bf i}\tau )}{(y-\tau)^{ \beta_0+ {\bf i} \beta_1}} d\tau, \ \textrm{with} \ y > 0 ,\\
[{\bf I}_{b^-}^{1-\alpha_0- {\bf i} \alpha_1} f\mid_{S_{a , b,c,{\bf i} }}](x+{\bf i}v) := & \frac{1}{\Gamma(1-\alpha_0- {\bf i} \alpha_1)} \int_x^b \frac{f\mid_{S_{a , b,c,{\bf i} }}(\tau + {\bf i} v)}{(x-\tau)^{ \alpha_0+ {\bf i} \alpha_1}} d\tau, \ \textrm{with} \ x < b, \\
[{\bf I}_{c^-}^{1-\beta_0- {\bf i} \beta_1} f\mid_{S_{a , b, c, {\bf i} }}](u+{\bf i}y) :=& \frac{1}{\Gamma(1-\beta_0- {\bf i} \beta_1)} \int_{y}^c \frac{f\mid_{S_{a , b,c, {\bf i} }}(u + {\bf i}\tau )}{(y-\tau)^{ \beta_0+ {\bf i} \beta_1}} d\tau, \ \textrm{with} \ y <  c .
\end{align*}
}

Let us denote by $AC^1(S_{a , b,c }, \mathbb H)$ the class of $\mathbb H$-valued continuously differentiable functions $f$ such that the mappings $x\mapsto  f\mid_{S_{a , b, c, {\bf i} }}(x+{\bf i}v)$ and $y \mapsto  f\mid_{S_{a , b, c, {\bf i} }}(u+{\bf i}y)$ are absolutely continuous for all $x\in[a,b]$, $y\in [0,c]$, and ${\bf i} \in \mathbb S^2$.
\begin{definition}\label{FDQ}
The fractional derivatives on the left and on the right in the Riemann-Liouville sense of $f\in AC^1(S_{a , b,c }, \mathbb H)$  associated with the slice $\mathbb C({\bf i})$ and of order induced by $(\alpha,\beta)$ are defined by

\begin{align}%\label{SRFracDer}
& ({_{RL}}D _{a^+,{\bf i} }^{(\alpha,\beta)} f\mid_{{S_{a , b, c, {\bf i} }}})(x+{\bf i}y , u,v) :=
 \nonumber \\
 & ({_{RL}}D _{a^+  }^{ \alpha_0+ {\bf i} \alpha_1}   f\mid_{S_{a , b,c,{\bf i} }}) (x+{\bf i}v) + {\bf i} ({_{RL}}D _{0^+  }^{ \beta_0+ {\bf i} \beta_1}   f\mid_{S_{a , b,c,{\bf i} }})(u+{\bf i}y) \nonumber \end{align}
and
\begin{align*}% \label{SRFracDer1}
&({_{RL}}D _{b^-, {\bf i}}^{(\alpha,\beta)} f\mid_{{S_{a , b, c, {\bf i} }}})(x+{\bf i} y, u,v):=
\nonumber \\
 & ({_{RL}}D _{b^-  }^{\alpha_0+ {\bf i} \alpha_1}   f\mid_{S_{a , b,c,{\bf i} }}) (x+{\bf i}v) + {\bf i} ({_{RL}}D _{c^-  }^{\beta_0+{\bf i} \beta_1}   f\mid_{S_{a , b,c,{\bf i} }})(u+{\bf i}y)
 \nonumber \\
 \end{align*}
respectively.
\end{definition}
\begin{remark}
The slice regular fractional Riemann-Liouville derivatives defined above are quaternionic right-linear operators.
\end{remark}
\begin{remark}
The right versions of the previous differential operators are given by
\begin{align*}  %\label{RightSRFracDer}
& ({_{RL}}D _{a^+,{\bf i},r }^{(\alpha,\beta)} f\mid_{{S_{a , b, c, {\bf i} }}})(x+{\bf i}y , u,v) :=
\nonumber \\
 & ({_{RL}}D _{a^+  }^{\alpha_0+ {\bf i} \alpha_1}   f\mid_{S_{a , b,c,{\bf i} }}) (x+{\bf i}v) +  ({_{RL}}D _{0^+  }^{\beta_0+{\bf i} \beta_1}   f\mid_{S_{a , b,c,{\bf i} }})(u+{\bf i}y) {\bf i} \end{align*}
and
\begin{align*}   % \label{RightSRFracDer1}
&({_{RL}}D _{b^-, {\bf i},r}^{(\alpha, \beta)} f\mid_{{S_{a , b, c, {\bf i} }}})(x+{\bf i} y, u,v):=
\nonumber \\
 & ({_{RL}}D _{b^-  }^{\alpha_0+ {\bf i} \alpha_1}   f\mid_{S_{a , b,c,{\bf i} }}) (x+{\bf i}v) +  ({_{RL}}D _{c^-  }^{\beta_0+ {\bf i} \beta_1}   f\mid_{S_{a , b,c,{\bf i} }})(u+{\bf i}y) {\bf i}.
\end{align*}

\end{remark}

\begin{proposition}\label{FracProp1}
Let $f\in AC^1(S_{a , b,c }, \mathbb H)$. Then
\begin{align*}
& ({_{RL}}D _{a^+,{\bf i} }^{(\alpha,\beta)} f\mid_{{S_{a , b, c, {\bf i} }}})(x+{\bf i}y , u,v)=
\nonumber \\
 & 2\overline{\partial}_{{\bf i}}   \left\{ [{\bf I}_{a^+}^{1-\alpha_0-{\bf i}\alpha_1} f\mid_{S_{a , b,c,{\bf i} }}](x+{\bf i}v) +
[{\bf I}_{0^+}^{1-\beta_0-{\bf i}\beta_1} f\mid_{S_{a , b, c, {\bf i} }}](u+{\bf i}y)\right\}
\end{align*}
and
\begin{align*}
&({_{RL}}D _{b^-, {\bf i}}^{(\alpha,\beta)} f\mid_{{S_{a , b, c, {\bf i} }}})(x+{\bf i} y, u,v)=
\nonumber \\
&(-1)2 \overline{\partial}_{{\bf i}}  \left\{ {\bf I}_{b^-}^{1-\alpha_0-{\bf i}\alpha_1} f\mid_{S_{a , b,c,{\bf i} }}](x+{\bf i}v)   +
  [{\bf I}_{c^-}^{1-\beta_0-{\bf i}\beta_1} f\mid_{S_{a , b, c, {\bf i} }}](u+{\bf i}y)
\right\}.
\end{align*}
 \end{proposition}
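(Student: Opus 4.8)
The plan is to unwind both sides to ordinary partial derivatives on a fixed slice and to observe that the Cauchy--Riemann operator $\overline{\partial}_{\bf i}$ splits exactly along the two one--variable pieces that build up the fractional derivative, the ``mixed'' partial derivatives being zero.

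First I would fix ${\bf i}\in\mathbb S^2$ and recall from the definition of slice regularity that, on $\mathbb C({\bf i})$ with coordinate $x+{\bf i}y$, one has $2\overline{\partial}_{\bf i}=\partial_x+{\bf i}\partial_y$; here the Gamma function $\Gamma(1-\alpha_0-{\bf i}\alpha_1)$ and the powers $(x-\tau)^{\alpha_0+{\bf i}\alpha_1}$ (with $x-\tau>0$) are just the usual holomorphic objects inside the fixed plane $\mathbb C({\bf i})\cong\mathbb C$, so the one--variable theory of Section~\ref{1} applies slice-wise with complex order $\alpha_0+{\bf i}\alpha_1$ (resp. $\beta_0+{\bf i}\beta_1$). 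In particular, since $f\in AC^1(S_{a,b,c},\mathbb H)$, the Riemann--Liouville derivatives in Definition~\ref{FDQ} exist and, by \eqref{FracDer}, are literally the partial derivatives
$({_{RL}}D_{a^+}^{\alpha_0+{\bf i}\alpha_1}f\mid_{S_{a,b,c,{\bf i}}})(x+{\bf i}v)=\partial_x[{\bf I}_{a^+}^{1-\alpha_0-{\bf i}\alpha_1}f\mid_{S_{a,b,c,{\bf i}}}](x+{\bf i}v)$
and
$({_{RL}}D_{0^+}^{\beta_0+{\bf i}\beta_1}f\mid_{S_{a,b,c,{\bf i}}})(u+{\bf i}y)=\partial_y[{\bf I}_{0^+}^{1-\beta_0-{\bf i}\beta_1}f\mid_{S_{a,b,c,{\bf i}}}](u+{\bf i}y)$.

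The key observation is then that $G_1(x+{\bf i}v):=[{\bf I}_{a^+}^{1-\alpha_0-{\bf i}\alpha_1}f\mid_{S_{a,b,c,{\bf i}}}](x+{\bf i}v)$, with $v$ a fixed parameter, depends only on the coordinate $x$, while $G_2(u+{\bf i}y):=[{\bf I}_{0^+}^{1-\beta_0-{\bf i}\beta_1}f\mid_{S_{a,b,c,{\bf i}}}](u+{\bf i}y)$, with $u$ fixed, depends only on $y$; this is immediate from the integral formulas defining these operators. Hence $\partial_yG_1=0$ and $\partial_xG_2=0$, so $2\overline{\partial}_{\bf i}(G_1+G_2)=(\partial_x+{\bf i}\partial_y)(G_1+G_2)=\partial_xG_1+{\bf i}\,\partial_yG_2$, and by the previous paragraph this equals $({_{RL}}D_{a^+}^{\alpha_0+{\bf i}\alpha_1}f\mid_{S_{a,b,c,{\bf i}}})(x+{\bf i}v)+{\bf i}\,({_{RL}}D_{0^+}^{\beta_0+{\bf i}\beta_1}f\mid_{S_{a,b,c,{\bf i}}})(u+{\bf i}y)$, which is precisely $({_{RL}}D_{a^+,{\bf i}}^{(\alpha,\beta)}f\mid_{S_{a,b,c,{\bf i}}})(x+{\bf i}y,u,v)$ by Definition~\ref{FDQ}. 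This gives the first identity.

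For the second identity I would repeat the argument verbatim, replacing $G_1$ by $[{\bf I}_{b^-}^{1-\alpha_0-{\bf i}\alpha_1}f\mid_{S_{a,b,c,{\bf i}}}](x+{\bf i}v)$ and $G_2$ by $[{\bf I}_{c^-}^{1-\beta_0-{\bf i}\beta_1}f\mid_{S_{a,b,c,{\bf i}}}](u+{\bf i}y)$; the only change is the factor $-1$ coming from \eqref{FracDer1} in each summand, which factors out to produce the global $(-1)$ in front of $2\overline{\partial}_{\bf i}$. There is essentially no hard step here: the entire content is the vanishing of the two mixed derivatives $\partial_yG_1$, $\partial_xG_2$, plus the bookkeeping of the complex orders $\alpha_0+{\bf i}\alpha_1$, $\beta_0+{\bf i}\beta_1$ inside the slice $\mathbb C({\bf i})$. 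The only point deserving a line of care is that differentiation under $\overline{\partial}_{\bf i}$ is legitimate, which is exactly what the $AC^1(S_{a,b,c},\mathbb H)$ hypothesis guarantees (cf. the one--variable statement recalled in Section~\ref{1}).
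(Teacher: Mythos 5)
Your proposal is correct and follows essentially the same route as the paper: unwind Definition \ref{FDQ} via \eqref{FracDer}--\eqref{FracDer1} to express each one--variable Riemann--Liouville derivative as $\partial_x$ (resp.\ $\partial_y$) of the corresponding fractional integral, and then identify the sum with $2\overline{\partial}_{\bf i}$ applied to the sum of the two integrals because the first summand depends only on $x$ and the second only on $y$. The paper compresses this into a one-line computation, while you make the vanishing of the mixed partials explicit, but there is no substantive difference.
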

\begin{proof}
By using Definition \ref{FDQ} and formulas \eqref{FracDer}, \eqref{FracDer1}, we immediately get
 {
\begin{align}
({_{RL}}D _{a^+,{\bf i} }^{(\alpha,\beta)} f\mid_{{S_{a , b, c, {\bf i} }}})(x+{\bf i}y , u,v)
 &=  \frac{\partial}{\partial x} [{\bf I}_{a^+}^{1-\alpha_0- {\bf i} \alpha_1} f\mid_{S_{a , b,c,{\bf i} }}](x+{\bf i}v) \nonumber \\
 &+ {\bf i} \frac{\partial}{\partial y}
[{\bf I}_{0^+}^{1-\beta_0- {\bf i} \beta_1} f\mid_{S_{a , b, c, {\bf i} }}](u+{\bf i}y)
\end{align}}
and
 {
 \begin{align}
 ({_{RL}}D _{b^-, {\bf i}}^{(\alpha,\beta)} f\mid_{{S_{a , b, c, {\bf i} }}})(x+{\bf i} y, u,v)
 &= -\left( \frac{\partial}{\partial x} [{\bf I}_{b^-}^{1-\alpha_0- {\bf i} \alpha_1} f\mid_{S_{a , b,c,{\bf i} }}](x+{\bf i}v)\right.\nonumber \\
 & +\left. {\bf i} \frac{\partial}{\partial y}
[{\bf I}_{c^-}^{1-\beta_0- {\bf i} \beta_1} f\mid_{S_{a , b, c, {\bf i} }}](u+{\bf i}y)\right)
\end{align}
}
\end{proof}

\begin{definition}(Fractional slice regular function)
Let $f\in AC^1(S_{a,b,c}, \mathbb H)$ such that the mappings
\begin{equation}\label{m1}
x\mapsto [{\bf I}_{a^+}^{1-\alpha_0-{\bf i}\alpha_1} f\mid_{S_{a , b,c,{\bf i} }}](x+{\bf i}v),
\end{equation}
and
\begin{equation}\label{m2}
y\mapsto [{\bf I}_{0^+}^{1-\beta_0-{\bf i}\beta_1} f\mid_{S_{a , b, c, {\bf i} }}](u+{\bf i}y)
\end{equation}
belong to $C^1([a , b], \mathbb H)$  and  $C^1([0 , c], \mathbb H)$, respectively. We will say that $f$ is a fractional slice regular function, in the sense of Riemann-Liouville, of order $ (\alpha ,\beta)$  on $S_{a , b,c }$ if
 \begin{align*}
& ({_{RL}}D _{a^+,{\bf i} }^{(\alpha,\beta)} f\mid_{{S_{a , b, c, {\bf i} }}})(x+{\bf i}y , u,v) =0 ,\quad \textrm{on}
 \  \  {S_{a , b, c, {\bf i} }} ,\end{align*} for all  ${\bf i}\in \mathbb S^2$.
Moreover, if
 \begin{align*}
& ({_{RL}}D _{a^+,{\bf i}, r }^{(\alpha,\beta)} f\mid_{{S_{a , b, c, {\bf i} }}})(x+{\bf i}y , u,v) =0 ,\quad \textrm{on} \  \
 {S_{a , b, c, {\bf i}}},
\end{align*}
for all  ${\bf i}\in \mathbb S^2$ then we call $f$ a right-fractional slice regular function  of order $ (\alpha ,\beta)$ on $S_{a , b,c }$.

We will denote by ${}_{RL}\mathcal{SR}^{(\alpha,\beta)}(S_{a , b, c  })$ the quaternionic right-linear space of all fractional slice regular functions  of order $ (\alpha ,\beta)$ on $S_{a , b,c }$ {and by ${}_{RL}\mathcal{SR}_r^{(\alpha,\beta)}(S_{a , b, c  })$ the quaternionic left-linear space of all right-fractional slice regular functions of order $(\alpha ,\beta)$ on $S_{a , b,c }$}.
\end{definition}

\begin{example} Given $q_1,q_2\in \mathbb H$ and $\delta_0, \delta_1,\gamma_0,\gamma_1\in \mathbb R$ such that $0 < \delta_0,\gamma_0< 1$ let us set
{\small
\begin{align*}
    &  f(x+{\bf i}y)= \\
     & \left\{ 1- {\bf I}_{a^+}^{1-\alpha_0- {\bf i }\alpha_1}  [\frac{(t-a)^{-\alpha_0- {\bf i }\alpha_1}}{\Gamma(1-\alpha_0- {\bf i }\alpha_1)} ](x) \right\}\left\{1- {\bf I}_{0^+}^{1-\beta_0- {\bf i }\beta_1} [\frac{t^{-\beta_0- {\bf i }\beta_1}}{\Gamma(1-\beta_0- {\bf i }\beta_1)}  ](y)\right\} q_1  \\
  & + {\bf i } \left\{    (x-a)^{\delta_0+{\bf i}\delta_1} - {\bf I}_{a^+}^{1-\alpha_0- {\bf i}\alpha_1} [\frac{\Gamma(\delta_0+{\bf i}\delta_1 +1)(x-a)^{\delta_0+{\bf i}\delta_1 -\alpha_0- {\bf i}\alpha_1}}{\Gamma(\delta_0
  + {\bf i}\delta_1 +1-\alpha_0 -{\bf i}\alpha_1)} ](x)\right\} \\
  &  \hspace{.6cm} \left\{   y^{\gamma_0 + {\bf i} \gamma_1} - {\bf I}_{0^+}^{1-\beta_0- {\bf i}\beta_1}
  [\frac{\Gamma(\gamma_0 + {\bf i} \gamma_1 +1)y^{\gamma_0 + {\bf i} \gamma_1-\beta_0 - {\bf i}\beta_1}}
  {\Gamma(\gamma_0 + {\bf i} \gamma_1+1-\beta_0 - {\bf i} \beta_1)} ](y)\right\} q_2,
\end{align*}}

\noindent
for all $x+{\bf i}y \in S_{a , b,c }$.

\noindent
From \eqref{cte} and direct computations it follows that
\begin{align*}
& ({_{RL}}D _{a^+,{\bf i} }^{(\alpha,\beta)} f\mid_{{S_{a , b, c, {\bf i} }}})(x+{\bf i}y , u,v) =0 ,\quad \textrm{on}
 \  \  {S_{a , b, c, {\bf i} }} ,\end{align*} for all ${\bf i}\in \mathbb S^2$ and all
 $(u,v)\in (a,b)\times (0,c)$.
 \end{example}
The relation between the fractional slice regular functions and the classical ones is described in the next result.
\begin{proposition}\label{FracProp2}
Let $f\in AC^1(S_{a , b,c }, \mathbb H)$ such that the mappings (\ref{m1}) and (\ref{m2}) belong to $C^1([a , b], \mathbb H)$ and  $C^1([0 , c], \mathbb H)$, respectively. Then $f\in {}_{RL}\mathcal{SR}^{(\alpha,\beta)}(S_{a , b, c })$ if and only if the mapping
\begin{equation}\label{formula44}
q=x+{\bf i}  y \mapsto  [{\bf I}_{a^+}^{1-\alpha_0-{\bf i}\alpha_1} f\mid_{S_{a , b,c,{\bf i} }}](x+{\bf i}v) +
[{\bf I}_{0^+}^{1-\beta_0+ {\bf i}\beta_1} f\mid_{S_{a , b, c, {\bf i} }}](u+{\bf i}y)
\end{equation}
belongs to $ \mathcal{SR}(S_{a , b, c })$.
\end{proposition}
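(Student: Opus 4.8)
The plan is to deduce the statement almost directly from Proposition~\ref{FracProp1} together with the definition of ordinary slice regularity, so that the only genuine work is the bookkeeping of the regularity hypotheses. First I would name the object under scrutiny: let $g\colon S_{a,b,c}\to\mathbb H$ be the map in \eqref{formula44}, so that on each slice $g\mid_{S_{a,b,c,{\bf i}}}(x+{\bf i}y)=[{\bf I}_{a^+}^{1-\alpha_0-{\bf i}\alpha_1}f\mid_{S_{a,b,c,{\bf i}}}](x+{\bf i}v)+[{\bf I}_{0^+}^{1-\beta_0-{\bf i}\beta_1}f\mid_{S_{a,b,c,{\bf i}}}](u+{\bf i}y)$. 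The hypotheses that the maps \eqref{m1} and \eqref{m2} lie in $C^1([a,b],\mathbb H)$ and $C^1([0,c],\mathbb H)$ are tailored precisely so that $g$ is real differentiable: on a fixed slice the first summand depends only on $x$ and the second only on $y$, hence $\partial_x g$ and $\partial_y g$ exist and are continuous, being the derivatives of \eqref{m1} and \eqref{m2}; combined with $f\in AC^1(S_{a,b,c},\mathbb H)$ this makes $g$ a bona fide real differentiable $\mathbb H$-valued function on $S_{a,b,c}$, so that the notion of slice regularity applies to it at all.

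Next I would invoke Proposition~\ref{FracProp1}, which reads exactly
$$({_{RL}}D_{a^+,{\bf i}}^{(\alpha,\beta)}f\mid_{S_{a,b,c,{\bf i}}})(x+{\bf i}y,u,v)=2\,\overline{\partial}_{{\bf i}}\,g\mid_{S_{a,b,c,{\bf i}}}(x+{\bf i}y)\quad\text{on }S_{a,b,c,{\bf i}},\ \ \forall\,{\bf i}\in\mathbb S^2.$$
From here the proposition is a chain of equivalences. By the definition of a fractional slice regular function in the Riemann--Liouville sense, $f\in{}_{RL}\mathcal{SR}^{(\alpha,\beta)}(S_{a,b,c})$ holds if and only if the left-hand side above vanishes identically on $S_{a,b,c,{\bf i}}$ for all ${\bf i}$; by the displayed identity, and since $2$ is invertible in $\mathbb H$, this is equivalent to $\overline{\partial}_{{\bf i}}\,g\mid_{S_{a,b,c,{\bf i}}}=0$ on $S_{a,b,c,{\bf i}}$ for all ${\bf i}$; and, $g$ being real differentiable, this last condition is by definition the assertion that $g\in\mathcal{SR}(S_{a,b,c})$. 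Because every link is an ``if and only if'', the two implications of the proposition are obtained simultaneously, with no need to argue the directions separately.

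I do not anticipate a serious obstacle; the delicate point, such as it is, is the regularity step used to set up $g$ and to apply Proposition~\ref{FracProp1}: one must justify that the partial derivatives of the Riemann--Liouville integrals of $f$ behave as claimed, i.e.\ that differentiation under the integral sign is legitimate and that $\partial_x,\partial_y$ are identified with the derivatives of \eqref{m1} and \eqref{m2}. This is exactly what the standing hypotheses $f\in AC^1(S_{a,b,c},\mathbb H)$ and \eqref{m1}, \eqref{m2}$\,\in C^1$ are meant to furnish, and it is the very computation already performed in the proof of Proposition~\ref{FracProp1}, so it can be quoted rather than redone.

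One small clarification worth recording in the write-up concerns the behaviour on the real axis $y=0$: there the second summand of $g$ is an empty integral and hence zero, while the first reduces to $[{\bf I}_{a^+}^{1-\alpha_0-{\bf i}\alpha_1}f\mid_{S_{a,b,c,{\bf i}}}](x+{\bf i}v)$. It should therefore be made explicit that, exactly as in Definition~\ref{FDQ}, the auxiliary parameters $u,v$ are regarded as fixed throughout the statement, so that both ``$({_{RL}}D_{a^+,{\bf i}}^{(\alpha,\beta)}f)(\cdot,u,v)=0$'' and ``$g\in\mathcal{SR}(S_{a,b,c})$'' refer to the same fixed datum $(u,v)$; once this is fixed the equivalence above is unambiguous.
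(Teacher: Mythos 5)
Your proposal is correct and follows the same route as the paper, whose entire proof is the single line ``It follows directly from Proposition \ref{FracProp1}''; you have simply made explicit the chain of equivalences (vanishing of the fractional derivative $\Leftrightarrow$ $\overline{\partial}_{\bf i}g=0$ on every slice $\Leftrightarrow$ $g\in\mathcal{SR}(S_{a,b,c})$) together with the regularity bookkeeping that the paper leaves implicit. Your quiet use of the exponent $1-\beta_0-{\bf i}\beta_1$ in place of the $1-\beta_0+{\bf i}\beta_1$ appearing in \eqref{formula44} is the reading consistent with Proposition \ref{FracProp1} and with the rest of the paper.
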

\begin{proof}
It follows directly from Proposition \ref{FracProp1}.
\end{proof}

\noindent
As a consequence of the Representation Theorem and Splitting Lemma, see Theorem \ref{1.3}, we have the following result:
\begin{proposition}\label{Fract1.3}
Let $f\in {}_{RL}\mathcal{SR}^{(\alpha,\beta)}(S_{a , b, c  }).$ Then
\begin{enumerate}
\item (Splitting lemma in the fractional case)
For every ${\bf i},{\bf j}\in \mathbb{S}$ mutually orthogonal,
there exist $F,G \in AC^{1} (S_{a , b, c ,{\bf i}} , \mathbb C({\bf i}) )$ such that
the mappings
$$x+{\bf i} y \mapsto  [{\bf I}_{a^+}^{1-\alpha_0-{\bf i}\alpha_1} F\mid_{S_{a , b,c,{\bf i} }}](x+{\bf i}v ) +
[{\bf I}_{0^+}^{1-\beta_0 - {\bf i}\beta_1} F\mid_{S_{a , b, c, {\bf i} }}](u+{\bf i}y )  $$
and
$$x+{\bf i} y \mapsto   [{\bf I}_{a^+}^{1-\alpha_0-{\bf i}\alpha_1} G\mid_{S_{a , b,c,{\bf i} }}](x+{\bf i}v) +
[{\bf I}_{0^+}^{1-\beta_0 - {\bf i}\beta_1} G\mid_{S_{a , b, c, {\bf i} }}](u+{\bf i}y )  $$
are holomorphic functions on ${S_{a , b, c, {\bf i} }}$ and $f_{\mid_{\Omega_{\bf i}}} =F +G  {\bf j}$ on ${S_{a , b, c, {\bf i} }}$.

\item (Representation Formula in the fractional case)
For every  $q=x+{\bf i}' y \in S_{a,b,c}$, and every ${\bf i}\in\mathbb S^2$  the following identity holds:
 {\small
\[\begin{split}
[{\bf I}_{a^+}^{1-\alpha_0-{\bf i}'\alpha_1} f\mid_{S_{a , b,c,{\bf i}' }}](x+{\bf i}'v )  & +
[{\bf I}_{0^+}^{1-\beta_0 - {\bf i}\beta_1} f\mid_{S_{a , b, c, {\bf i}' }}](u+{\bf i}'y )  =  \\
& \frac{1}{2}\left\{  {\bf I}_{a^+}^{1-\alpha_0-{\bf i}\alpha_1} [
 f\mid_{S_{a , b,c,{\bf i} }}
 - {\bf i}' {\bf i}  f\mid_{S_{a , b,c,{\bf i} }}  ]  \right.  \\
& \left. +    {\bf I}_{a^+}^{1-\alpha_0+{\bf i}\alpha_1} [
 f\mid_{S_{a , b,c,-{\bf i} }}  +  {\bf i}' {\bf i}
  f\mid_{S_{a , b,c,-{\bf i} }} ] \right\} (x+{\bf i}v )
\\
& +
\frac {1}{2} \left\{ {\bf I}_{0^+}^{1-\beta_0-{\bf i}\beta_1 } [
 f\mid_{S_{a , b,c,{\bf i} }}
    - {\bf i}' {\bf i}
  f\mid_{S_{a , b,c,{\bf i} }}]  \right. \\
 & \left. +
 {\bf I}_{0^+}^{1-\beta_0+{\bf i}\beta_1 }[
 f\mid_{S_{a , b,c,-{\bf i} }} +{\bf i}' {\bf i}
  f\mid_{S_{a , b,c,-{\bf i} }}   ] \right\} (u+{\bf i}y ) .
\end{split} \] }

\noindent
for all ${\bf i}\in \mathbb S^2$.
\end{enumerate}
\end{proposition}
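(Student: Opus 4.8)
The plan is to transport both statements to the classical slice-regular setting by means of Proposition~\ref{FracProp2}, which says that $f\in{}_{RL}\mathcal{SR}^{(\alpha,\beta)}(S_{a,b,c})$ precisely when the function $g$ determined by the map \eqref{formula44} belongs to $\mathcal{SR}(S_{a,b,c})$; here, for each ${\bf i}\in\mathbb{S}^2$,
$$g\mid_{S_{a,b,c,{\bf i}}}(x+{\bf i}y)=[{\bf I}_{a^+}^{1-\alpha_0-{\bf i}\alpha_1}f\mid_{S_{a,b,c,{\bf i}}}](x+{\bf i}v)+[{\bf I}_{0^+}^{1-\beta_0-{\bf i}\beta_1}f\mid_{S_{a,b,c,{\bf i}}}](u+{\bf i}y).$$
I would then apply Theorem~\ref{1.3} to $g$ and read the conclusions back in terms of $f$ through these transforms. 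The bookkeeping uses only the elementary identities ${\bf j}z=\bar z{\bf j}$ and $\overline{(x-\tau)^{\alpha_0+{\bf i}\alpha_1}}=(x-\tau)^{\alpha_0-{\bf i}\alpha_1}$ (valid for $z\in\mathbb{C}({\bf i})$ and $x-\tau>0$), together with the right $\mathbb{C}({\bf i})$-linearity of the fractional-integral operators.

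For the Splitting Lemma, fix mutually orthogonal ${\bf i},{\bf j}\in\mathbb{S}^2$ and write $f\mid_{S_{a,b,c,{\bf i}}}=F+G{\bf j}$ with $F,G$ the $\mathbb{C}({\bf i})$-valued components of $f$; since $f\in AC^1$, both $F$ and $G$ lie in $AC^1(S_{a,b,c,{\bf i}},\mathbb{C}({\bf i}))$. I would first note that $\overline{\partial}_{{\bf i}}$ respects the decomposition $\mathbb{H}=\mathbb{C}({\bf i})\oplus\mathbb{C}({\bf i}){\bf j}$, namely $\overline{\partial}_{{\bf i}}(P+Q{\bf j})=(\overline{\partial}_{{\bf i}}P)+(\overline{\partial}_{{\bf i}}Q){\bf j}$ for $\mathbb{C}({\bf i})$-valued $P,Q$, so that vanishing of $\overline{\partial}_{{\bf i}}$ on a sum forces it on each summand. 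Then I would feed $f=F+G{\bf j}$ into the transform defining $g$ and move ${\bf j}$ to the far right: the $F$-contribution reproduces the transform of $F$ written in the statement, while the $G$-contribution takes the form $R\,{\bf j}$, where $R$ is a fractional-integral transform of $G$ (the one displayed in the statement, once the conjugations produced by moving ${\bf j}$ past the $\mathbb{C}({\bf i})$-valued kernels $(x-\tau)^{-\alpha_0-{\bf i}\alpha_1}$, $(y-\tau)^{-\beta_0-{\bf i}\beta_1}$ have been accounted for). Since $g\mid_{S_{a,b,c,{\bf i}}}$ is holomorphic by Proposition~\ref{FracProp2}, the first observation forces holomorphy of the transform of $F$ and of $R$ separately, while $f\mid_{S_{a,b,c,{\bf i}}}=F+G{\bf j}$ is the required decomposition; the $C^1$-regularity of the transformed maps is part of the definition of ${}_{RL}\mathcal{SR}^{(\alpha,\beta)}$.

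For the Representation Formula, I would apply the Representation Formula of Theorem~\ref{1.3} to $g\in\mathcal{SR}(S_{a,b,c})$ at the point $q=x+{\bf i}'y$, with ${\bf i}$ as auxiliary unit:
$$g(x+{\bf i}'y)=\tfrac12\bigl[g(x+{\bf i}y)+g(x-{\bf i}y)\bigr]+\tfrac12{\bf i}'{\bf i}\bigl[g(x-{\bf i}y)-g(x+{\bf i}y)\bigr].$$
Then I would substitute the formula for $g$ on each of the slices $\mathbb{C}({\bf i}')$, $\mathbb{C}({\bf i})$ and $\mathbb{C}(-{\bf i})$; on the slice $\mathbb{C}(-{\bf i})$ the orders of the operators turn into $1-\alpha_0+{\bf i}\alpha_1$ and $1-\beta_0+{\bf i}\beta_1$, which is exactly why these conjugated orders, and the restrictions $f\mid_{S_{a,b,c,-{\bf i}}}$, occur on the right-hand side of the claimed identity. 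Regrouping the two $x$-integral contributions and the two $y$-integral contributions, and using right $\mathbb{C}({\bf i})$-linearity to gather the coefficient ${\bf i}'{\bf i}$ against the integrand $f$, turns the displayed classical identity into the asserted one; its left-hand side is $g(x+{\bf i}'y)$ by the definition of $g$.

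The regularity bookkeeping and the manipulations with imaginary units are routine. The genuinely delicate point is the noncommutative algebra. In the Splitting Lemma one must check that transporting $f=F+G{\bf j}$ through the fractional-integral transforms keeps the output of the form $P_0+Q_0{\bf j}$ with $P_0,Q_0$ holomorphic and $\mathbb{C}({\bf i})$-valued, and that $P_0,Q_0$ are the transforms displayed in the statement---the sole effect of passing ${\bf j}$ to the right being a conjugation of the orders acting on the ${\bf j}$-component, which has to be matched carefully. Likewise, in the Representation Formula one has to track on which side of the fractional-integral operators the scalar ${\bf i}'{\bf i}$ ultimately sits. Reconciling these conjugation and sidedness conventions with the orders and placements displayed is where the computation must be carried out with care.
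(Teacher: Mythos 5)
Your proposal is correct and takes essentially the same route as the paper: both reduce the statement to the classical Splitting Lemma and Representation Formula via Proposition~\ref{FracProp2}, applying Theorem~\ref{1.3} to the auxiliary slice regular map $g$ of \eqref{formula44} and translating back, with the key observation that evaluating $g$ on the slice $\mathbb C(-{\bf i})$ is what produces the conjugated orders $1-\alpha_0+{\bf i}\alpha_1$, $1-\beta_0+{\bf i}\beta_1$ and the restrictions $f\mid_{S_{a,b,c,-{\bf i}}}$. One caveat on your splitting argument: under the paper's convention the kernels $(x-\tau)^{-\alpha_0-{\bf i}\alpha_1}$, $(y-\tau)^{-\beta_0-{\bf i}\beta_1}$ multiply $f$ from the left (this is what makes the operators quaternionic right-linear, as the paper remarks), so for $f=F+G{\bf j}$ the unit ${\bf j}$ never crosses a kernel and \emph{no} conjugation of the orders occurs on the $G$-component --- the operators acting on $G$ are literally the same as those acting on $F$, exactly as displayed; if you carried out the conjugation you describe, you would obtain ${\bf I}_{a^+}^{1-\alpha_0+{\bf i}\alpha_1}G$ and contradict the statement.
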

\begin{proof}
We first prove the Splitting Lemma. As the mapping
$$q=x+{\bf i} y \mapsto  [{\bf I}_{a^+}^{1-\alpha_0-{\bf i}\alpha_1} f\mid_{S_{a , b,c,{\bf i} }}](x+{\bf i}v ) +
[{\bf I}_{0^+}^{1-\beta_0 - {\bf i}\beta_1} f\mid_{S_{a , b, c, {\bf i} }}](u+{\bf i}y )$$
belongs to $ \mathcal{SR}(S_{a , b, c })$ we have that:
%\begin{enumerate}
%\item
\[\begin{split}
[{\bf I}_{a^+}^{1-\alpha_0-{\bf i}\alpha_1} f\mid_{S_{a , b,c,{\bf i} }}] (x+{\bf i}v )   +
[{\bf I}_{0^+}^{1-\beta_0 - {\bf i}\beta_1} & f\mid_{S_{a , b, c, {\bf i} }}] (u+{\bf i}y )  = \\
& [{\bf I}_{a^+}^{1-\alpha_0-{\bf i}\alpha_1} f_1\mid_{S_{a , b,c,{\bf i} }}](x+{\bf i}v )  \\
& + [{\bf I}_{0^+}^{1-\beta_0 - {\bf i}\beta_1} f_1\mid_{S_{a , b, c, {\bf i} }}](u+{\bf i}y )  \\
& +\left\{ [{\bf I}_{a^+}^{1-\alpha_0-{\bf i}\alpha_1} f_2\mid_{S_{a , b,c,{\bf i} }}](x+{\bf i}v) \right. \\
&\left. + [{\bf I}_{0^+}^{1-\beta_0 - {\bf i}\beta_1} f_2\mid_{S_{a , b, c, {\bf i} }}](u+{\bf i}y ) \right\}{\bf j},
\end{split}\]

\noindent
where $f\mid{S_{a , b, c ,{\bf i}}} =f_1 + f_2 {\bf j}$ and
$f_1,f_2\in AC^{1}( S_{a , b, c ,{\bf i}} , \mathbb C({\bf i}) )$. Moreover, by the Splitting Lemma for slice regular functions, we have that the mappings
$$x+{\bf i} y \mapsto  [{\bf I}_{a^+}^{1-\alpha_0-{\bf i}\alpha_1} f_1\mid_{S_{a , b,c,{\bf i} }}](x+{\bf i}v ) +
[{\bf I}_{0^+}^{1-\beta_0 - {\bf i}\beta_1} f_1\mid_{S_{a , b, c, {\bf i} }}](u+{\bf i}y )  $$
and
$$x+{\bf i} y \mapsto   [{\bf I}_{a^+}^{1-\alpha_0-{\bf i}\alpha_1} f_2\mid_{S_{a , b,c,{\bf i} }}](x+{\bf i}v) +
[{\bf I}_{0^+}^{1-\beta_0 - {\bf i}\beta_1} f_2\mid_{S_{a , b, c, {\bf i} }}](u+{\bf i}y )$$
are holomorphic functions on ${S_{a , b, c, {\bf i} }}$ and this concludes the proof.

%\item
\noindent
To prove the Representation Formula in the fractional case, we first observe that
the quaternionic conjugation acts on the map \eqref{formula44} as:
\begin{align*}
\bar q=x-{\bf i}  y \mapsto & \  [{\bf I}_{a^+}^{1-\alpha_0+{\bf i}\alpha_1} f\mid_{S_{a , b,c,{\bf i} }}](x-{\bf i}v ) +
[{\bf I}_{0^+}^{1-\beta_0 +{\bf i}\beta_1} f\mid_{S_{a , b, c, {\bf i} }}](u-{\bf i}y) \\
& \ = [{\bf I}_{a^+}^{1-\alpha_0+{\bf i}\alpha_1} f\mid_{S_{a , b,c,-{\bf i} }}](x+{\bf i}v ) +
[{\bf I}_{0^+}^{1-\beta_0+{\bf i}\beta_1} f\mid_{S_{a , b, c, -{\bf i} }}](u+{\bf i}y)  .
\end{align*}

We now consider any $q=x+{\bf i}' y \in S_{a,b,c}$ , and any  ${\bf i}  \in \mathbb S^2$. We have that
\[\begin{split}
[{\bf I}_{a^+}^{1-\alpha_0-{\bf i}'\alpha_1} &  f\mid_{S_{a , b,c,{\bf i}' }}](x +{\bf i}'v ) +
[{\bf I}_{0^+}^{1-\beta_0-{\bf i}'\beta_1 } f\mid_{S_{a , b, c, {\bf i}' }}](u+{\bf i}'y )  = \\
&\hspace{-8mm}  \frac {1}{2}\left\{
[{\bf I}_{a^+}^{1-\alpha_0-{\bf i}\alpha_1} f\mid_{S_{a , b,c,{\bf i} }}](x+{\bf i}v ) + [{\bf I}_{0^+}^{1-\beta_0-{\bf i}\beta_1 } f\mid_{S_{a , b, c, {\bf i} }}](u+{\bf i}y )  \right. \\
&\hspace{-8mm} \left.+
[{\bf I}_{a^+}^{1-\alpha_0-{\bf i}\alpha_1} f\mid_{S_{a , b,c,{\bf i} }}](x-{\bf i}v ) + [{\bf I}_{0^+}^{1-\beta_0-{\bf i}\beta_1 } f\mid_{S_{a , b, c, {\bf i} }}](u-{\bf i}y  )
\right\} \\
&\hspace{-8mm} + \frac {1}{2} {\bf i}' {\bf i}\left\{
[{\bf I}_{a^+}^{1-\alpha_0-{\bf i}\alpha_1} f\mid_{S_{a , b,c,{\bf i} }}](x-{\bf i}v ) + [{\bf I}_{0^+}^{1-\beta_0-{\bf i}\beta_1 } f\mid_{S_{a , b, c, {\bf i} }}](u-{\bf i}y ) \right.\\
&\hspace{-8mm} \left. -
[{\bf I}_{a^+}^{1-\alpha_0-{\bf i}\alpha_1} f\mid_{S_{a , b,c,{\bf i} }}](x+{\bf i}v ) + [{\bf I}_{0^+}^{1-\beta_0-{\bf i}\beta_1 } f\mid_{S_{a , b, c, {\bf i} }}](u+{\bf i}y  )
\right\}\\
&\hspace{-8mm} =
\frac {1}{2}\left\{
[{\bf I}_{a^+}^{1-\alpha_0-{\bf i}\alpha_1} f\mid_{S_{a , b,c,{\bf i} }}](x+{\bf i}v ) +[{\bf I}_{0^+}^{1-\beta_0-{\bf i}\beta_1 } f\mid_{S_{a , b, c, {\bf i} }}](u+{\bf i}y )  \right. \\
&\hspace{-8mm} \left.+
[{\bf I}_{a^+}^{1-\alpha_0+{\bf i}\alpha_1} f\mid_{S_{a , b,c,-{\bf i} }}](x+{\bf i}v ) + [{\bf I}_{0^+}^{1-\beta_0+{\bf i}\beta_1 } f\mid_{S_{a , b, c, -{\bf i} }}](u+{\bf i}y  )
\right\} \\
&\hspace{-8mm} + \frac {1}{2} {\bf i}' {\bf i}\left\{
[{\bf I}_{a^+}^{1-\alpha_0+{\bf i}\alpha_1} f\mid_{S_{a , b,c,-{\bf i} }}](x+{\bf i}v ) + [{\bf I}_{0^+}^{1-\beta_0+{\bf i}\beta_1 } f\mid_{S_{a , b, c, -{\bf i} }}](u+{\bf i}y ) \right.\\
&\hspace{-8mm} \left. -
[{\bf I}_{a^+}^{1-\alpha_0-{\bf i}\alpha_1} f\mid_{S_{a , b,c,{\bf i} }}](x+{\bf i}v )  + [{\bf I}_{0^+}^{1-\beta_0-{\bf i}\beta_1 } f\mid_{S_{a , b, c, {\bf i} }}](u+{\bf i}y  )
\right\}\\
&\hspace{-8mm} =
\frac{1}{2}\left\{  {\bf I}_{a^+}^{1-\alpha_0-{\bf i}\alpha_1} [
 f\mid_{S_{a , b,c,{\bf i} }}
 - {\bf i}' {\bf i}  f\mid_{S_{a , b,c,{\bf i} }}  ]   \right. \\
&\hspace{-8mm} \left. +    {\bf I}_{a^+}^{1-\alpha_0+{\bf i}\alpha_1} [
 f\mid_{S_{a , b,c,-{\bf i} }}  +  {\bf i}' {\bf i}
  f\mid_{S_{a , b,c,-{\bf i} }} ] \right\} (x+{\bf i}v )
\\
&\hspace{-8mm} +
\frac {1}{2} \left\{ {\bf I}_{0^+}^{1-\beta_0-{\bf i}\beta_1 } [
 f\mid_{S_{a , b,c,{\bf i} }}
    - {\bf i}' {\bf i}
  f\mid_{S_{a , b,c,{\bf i} }}] \right. \\
&\hspace{-8mm} \left. + {\bf I}_{0^+}^{1-\beta_0+{\bf i}\beta_1 }[
 f\mid_{S_{a , b,c,-{\bf i} }} +{\bf i}' {\bf i}
  f\mid_{S_{a , b,c,-{\bf i} }}   ] \right\} (u+{\bf i}y ) ,
\end{split}\]

\noindent
for all ${\bf i}\in \mathbb S^2$. The statement follows.
%\end{enumerate}
\end{proof}
The following result is peculiar of the fractional case:
\begin{proposition}\label{Fract1.3.1}
Let $f\in {}_{RL}\mathcal{SR}^{(\alpha,\beta)}(S_{a , b, c  }).$ Then
\[\begin{split}
\frac{y ^{\beta_0 +{\bf i}'\beta_1 -1}}{\Gamma[ \beta_0 +{\bf i}'\beta_1]}  f & \mid_{S_{a , b,c,{\bf i}' }}  (x +{\bf i}'v ) +
   \frac{(x-a)^{\alpha_0+ {\bf i}'\alpha_1-1}}{\Gamma[ \alpha_0+ {\bf i}'\alpha_1]} f\mid_{S_{a , b, c, {\bf i}' }}(u+{\bf i}' y ) =  \\
&
\frac{1}{2}
\frac{v^{-\beta_0 - {\bf i}\beta_1 } } {\Gamma[1-\beta_0 -{\bf i}\beta_1]}
 {_{RL}}D_{0^+}^{1-\alpha_0 -{\bf i}'\alpha_1}
  \left\{   {\bf I}_{a^+}^{1-\alpha_0-{\bf i}\alpha_1} \left[ (
 f
 - {\bf i}' {\bf i}  f)\mid_{S_{a , b,c,{\bf i} }}  \right]   \right. \\
& \left. +    {\bf I}_{a^+}^{1-\alpha_0+{\bf i}\alpha_1} \left[ (
 f   +  {\bf i}' {\bf i}
  f ) \mid_{S_{a , b,c,-{\bf i} }} \right] \right\} (x+{\bf i}v )
\\
 & +
\frac {1}{2} \frac{(u-a)^{-\alpha_0- {\bf i} \alpha_1 }}{\Gamma[1-\alpha_0 - {\bf i}\alpha_1]}  {_{RL}}D_{a^+}^{1-\beta_0 - {\bf i}'\beta_1} \left\{ {\bf I}_{0^+}^{1-\beta_0-{\bf i}\beta_1 } \left[ (
 f      - {\bf i}' {\bf i}
  f)\mid_{S_{a , b,c,{\bf i} }}\right] \right. \\
& \left. +
 {\bf I}_{0^+}^{1-\beta_0+{\bf i}\beta_1 }\left[
( f  +{\bf i}' {\bf i}
  f) \mid_{S_{a , b,c,-{\bf i} }}  \right] \right\} (u+{\bf i}y ) ,
\end{split}\]
%}
\noindent
for all ${\bf i},{\bf i}'\in \mathbb S^2$,  $x   >  a $ and $y > 0$.
\end{proposition}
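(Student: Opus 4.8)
The plan is to derive this identity by applying, to both sides of the Representation Formula of Proposition~\ref{Fract1.3}, the two left Riemann--Liouville fractional derivatives that invert the fractional integrals appearing on its left-hand side. Unlike the classical situation, where the derivative of a slice regular function is again slice regular and differentiating the representation formula merely reproduces it for $f'$, here these fractional derivatives cannot be cancelled against the fractional integrals occurring on the right-hand side and they instead generate explicit power-function weights; this is exactly what makes the statement characteristic of the fractional setting.

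Keeping $u,v$ fixed, write the left-hand side of the Representation Formula as $\Phi(x)+\Psi(y)$, where $\Phi(x)=[{\bf I}_{a^+}^{1-\alpha_0-{\bf i}'\alpha_1}f\mid_{S_{a,b,c,{\bf i}'}}](x+{\bf i}'v)$ is a function of the first real coordinate $x$ alone and $\Psi(y)=[{\bf I}_{0^+}^{1-\beta_0-{\bf i}'\beta_1}f\mid_{S_{a,b,c,{\bf i}'}}](u+{\bf i}'y)$ is a function of the second real coordinate $y$ alone, and write the right-hand side as $P(x)+Q(y)$, with $P$ the first bracket evaluated at $x+{\bf i}v$ and $Q$ the second bracket evaluated at $u+{\bf i}y$; thus $\Psi$ and $Q$ are constant in $x$, while $\Phi$ and $P$ are constant in $y$. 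To the identity $\Phi(x)+\Psi(y)=P(x)+Q(y)$ I would apply first ${_{RL}}D_{a^+}^{1-\alpha_0-{\bf i}'\alpha_1}$ in the variable $x$ and then ${_{RL}}D_{0^+}^{1-\beta_0-{\bf i}'\beta_1}$ in the variable $y$; these commute, acting on different real variables, and the slice fractional Riemann--Liouville derivatives are quaternionic right-linear, as noted in the Remark after Definition~\ref{FDQ}. The effect on each summand is read off from two facts. When one of the operators meets a fractional integral of $f$ of exactly its own order, the Fundamental Theorem~\eqref{FundTheorem} collapses it and returns $f\mid_{S_{a,b,c,{\bf i}'}}$ at the corresponding point—this is what happens to $\Phi$ under ${_{RL}}D_{a^+}^{1-\alpha_0-{\bf i}'\alpha_1}$ and to $\Psi$ under ${_{RL}}D_{0^+}^{1-\beta_0-{\bf i}'\beta_1}$. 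When instead an operator meets an expression that is constant in the variable it differentiates, formula~\eqref{cte} of Proposition~\ref{Prop2.2} pulls out the weight $({_{RL}}D_{a^+}^{1-\alpha_0-{\bf i}'\alpha_1}1)(x)=(x-a)^{\alpha_0+{\bf i}'\alpha_1-1}/\Gamma[\alpha_0+{\bf i}'\alpha_1]$, respectively $({_{RL}}D_{0^+}^{1-\beta_0-{\bf i}'\beta_1}1)(y)=y^{\beta_0+{\bf i}'\beta_1-1}/\Gamma[\beta_0+{\bf i}'\beta_1]$—this covers $\Psi$ and $Q$ under the $x$-derivative and $\Phi$, $P$ and the freshly produced copy of $f$ under the $y$-derivative. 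The only terms on which neither simplification applies are $P$ under the $x$-derivative and $Q$ under the $y$-derivative, whose internal orders $1-\alpha_0\pm{\bf i}\alpha_1$ and $1-\beta_0\pm{\bf i}\beta_1$ do not match those of the operators, so there the Riemann--Liouville derivative simply remains in front of the bracket. Collecting the four resulting summands and rearranging yields the asserted identity, for $x>a$ and $y>0$.

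The main difficulty is the bookkeeping rather than any single computation: each summand must be classified correctly (a genuine integral to be inverted, a constant to be differentiated, or neither), and, since the power-function weights lie in $\mathbb C({\bf i}')$ while $f$ is $\mathbb H$-valued, the non-commutativity of $\mathbb H$ has to be propagated carefully—one must track on which side each scalar factor ends up, which is precisely where the right-linearity of the operators is used. One should also check that the hypothesis $f\in AC^1(S_{a,b,c},\mathbb H)$ together with the $C^1$-regularity of the maps~\eqref{m1} and~\eqref{m2} suffices for every intermediate derivative to exist and for~\eqref{FundTheorem} to apply at each step; the requirement that the weights $(x-a)^{\alpha_0+{\bf i}'\alpha_1-1}$ and $y^{\beta_0+{\bf i}'\beta_1-1}$ be defined is what forces the restriction to $x>a$ and $y>0$.
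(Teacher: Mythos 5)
Your proposal is correct and follows essentially the same route as the paper: the paper's one-line proof likewise applies the composition of the two left Riemann--Liouville fractional derivatives to the Representation Formula of Proposition~\ref{Fract1.3}, collapsing the matching-order integrals via the Fundamental Theorem \eqref{FundTheorem} and producing the power-function weights from \eqref{cte} on the terms that are constant in the differentiated variable. Your pairing ${}_{RL}D_{a^+}^{1-\alpha_0-{\bf i}'\alpha_1}$ in $x$ and ${}_{RL}D_{0^+}^{1-\beta_0-{\bf i}'\beta_1}$ in $y$ is the one that actually makes the Fundamental Theorem applicable (the paper writes the base points and orders of the composite in swapped form, evidently a typo), and your explicit bookkeeping of which summands collapse, which produce weights, and which retain a residual derivative is exactly the content the paper leaves implicit.
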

\begin{proof}
The assertion follows by combining the action of ${_{RL}}D_{0^+}^{1-\alpha_0 -{\bf i}'\alpha_1}  \circ {_{RL}}D_{a^+}^{1-\beta_0 - {\bf i}'\beta_1} $ with the Fundamental Theorem for Riemann-Liouville fractional calculus, see \eqref{FundTheorem}, and with \eqref{cte}.
\end{proof}
In particular, we have:
\begin{corollary}
Let $\alpha_1=\beta_1=0$. Then for any  $f\in {}_{RL}\mathcal{SR}^{(\alpha,\beta)}(S_{a , b, c  })$ we have that

\begin{align*}
&  \frac{y ^{\beta_0   -1}}{\Gamma[ \beta_0  ]}  f\mid_{S_{a , b,c,{\bf i}' }} (x+{\bf i}'v ) +
   \frac{(x-a)^{\alpha_0 -1}}{\Gamma[ \alpha_0 ]} f\mid_{S_{a , b, c, {\bf i}' }}(u+{\bf i}'y )  \\
=&
\frac{1}{2} \left\{
\frac{v^{-\beta_0 } } {\Gamma[1-\beta_0  ]}
 (  f
 - {\bf i}' {\bf i}  f)\mid_{S_{a , b,c,{\bf i} }}   (x+ {\bf i} v)
+ \frac{v^{-\beta_0 } } {\Gamma[1-\beta_0  ]}
   (
 f   +  {\bf i}' {\bf i}
  f ) \mid_{S_{a , b,c,-{\bf i} }}   (x+{\bf i}v )  \right.
\\
&
\left.
 +
 \frac{(u-a)^{-\alpha_0}}{\Gamma[1-\alpha_0 ]}   (
 f      - {\bf i}' {\bf i}
  f)\mid_{S_{a , b,c,{\bf i} }}   (u+{\bf i}y )
 +
\frac{(u-a)^{-\alpha_0}}{\Gamma[1-\alpha_0 ]}   (
 f  +{\bf i}' {\bf i}
  f) \mid_{S_{a , b,c,-{\bf i} }}  ) (u+{\bf i} y)  \right\} ,
\end{align*}

\noindent
for all $x   >  a $ and $y > 0$.
\end{corollary}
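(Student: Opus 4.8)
\section*{Proof proposal}

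The plan is to deduce the statement directly from Proposition~\ref{Fract1.3.1}, of which it is merely the $\alpha_1=\beta_1=0$ specialization; no essentially new computation is needed, only a careful tracking of how the complex orders degenerate. First I would put $\alpha_1=\beta_1=0$ everywhere in Proposition~\ref{Fract1.3.1}. On the left-hand side the prefactors $y^{\,\beta_0+{\bf i}'\beta_1-1}/\Gamma[\beta_0+{\bf i}'\beta_1]$ and $(x-a)^{\alpha_0+{\bf i}'\alpha_1-1}/\Gamma[\alpha_0+{\bf i}'\alpha_1]$ become the real scalars $y^{\,\beta_0-1}/\Gamma[\beta_0]$ and $(x-a)^{\alpha_0-1}/\Gamma[\alpha_0]$, so the left-hand side is already literally the left-hand side of the Corollary.

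Then I would simplify the right-hand side. With $\alpha_1=\beta_1=0$ every exponent of the form $1-\alpha_0\pm{\bf i}\alpha_1$ reduces to $1-\alpha_0$ and every exponent $1-\beta_0\pm{\bf i}\beta_1$ reduces to $1-\beta_0$; thus each Riemann--Liouville integral appearing inside the braces of Proposition~\ref{Fract1.3.1} is ${\bf I}_{a^+}^{1-\alpha_0}$ or ${\bf I}_{0^+}^{1-\beta_0}$, while the outer operators are ${_{RL}}D_{0^+}^{1-\alpha_0}$ and ${_{RL}}D_{a^+}^{1-\beta_0}$. Using the standing convention that the functions involved are extended by zero outside $[a,b]$ (resp. $[0,c]$) --- so that ${\bf I}_{0^+}^{1-\alpha_0}$ and ${\bf I}_{a^+}^{1-\alpha_0}$ agree on $[a,b]$ --- the Fundamental Theorem for the Riemann--Liouville fractional calculus \eqref{FundTheorem} yields ${_{RL}}D_{0^+}^{1-\alpha_0}\,{\bf I}_{a^+}^{1-\alpha_0}={\rm id}$ and ${_{RL}}D_{a^+}^{1-\beta_0}\,{\bf I}_{0^+}^{1-\beta_0}={\rm id}$ on the relevant intervals. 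Applying these, the two braces in Proposition~\ref{Fract1.3.1} collapse to the sums $(f-{\bf i}'{\bf i}f)\mid_{S_{a,b,c,{\bf i}}}+(f+{\bf i}'{\bf i}f)\mid_{S_{a,b,c,-{\bf i}}}$ evaluated at the appropriate point, and the now-real scalar prefactors $\tfrac12\,v^{-\beta_0}/\Gamma[1-\beta_0]$ and $\tfrac12\,(u-a)^{-\alpha_0}/\Gamma[1-\alpha_0]$ distribute across those sums. Collecting the four summands produces exactly the right-hand side of the Corollary, with the same range $x>a$, $y>0$ inherited from Proposition~\ref{Fract1.3.1}.

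There is no serious obstacle here: the statement is an immediate corollary of Proposition~\ref{Fract1.3.1}, and the hypotheses $f\in{}_{RL}\mathcal{SR}^{(\alpha,\beta)}(S_{a,b,c})$ already guarantee --- through the $C^1$ conditions \eqref{m1}, \eqref{m2} --- that all the fractional integrals and derivatives written above are well defined. The one feature worth keeping in mind, already built into Proposition~\ref{Fract1.3.1} and its proof, is that the base points $0^+$ and $a^+$ attached respectively to the outer derivative and to the inner integral in each block are compatible once the functions are extended by zero outside the interval of integration, which is what lets the composition collapse via \eqref{FundTheorem} and \eqref{cte}; the degeneracy $\alpha_1=\beta_1=0$ then simply turns all the $\Gamma$-factors and powers into real scalars and removes any ${\bf i}$ versus ${\bf i}'$ mismatch, so that the scalar prefactors pass freely across the $\mathbb H$-valued brackets. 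An alternative, marginally longer route would bypass Proposition~\ref{Fract1.3.1} and apply ${_{RL}}D_{0^+}^{1-\alpha_0}\circ{_{RL}}D_{a^+}^{1-\beta_0}$ directly to the fractional Representation Formula in Proposition~\ref{Fract1.3}, simplifying term by term with \eqref{FundTheorem} and \eqref{cte}; it gives the same identity.
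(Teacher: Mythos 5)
Your proposal is correct and is exactly the argument the paper intends: the corollary is stated as an immediate specialization of Proposition \ref{Fract1.3.1} at $\alpha_1=\beta_1=0$, where the disappearance of the ${\bf i}$ versus ${\bf i}'$ mismatch in the orders lets the compositions ${_{RL}}D^{1-\alpha_0}\circ{\bf I}^{1-\alpha_0}$ and ${_{RL}}D^{1-\beta_0}\circ{\bf I}^{1-\beta_0}$ collapse via \eqref{FundTheorem}, after which the real scalar prefactors distribute over the four summands. Your remark about reconciling the $0^+$ versus $a^+$ basepoints (a notational slip already present in the statement of Proposition \ref{Fract1.3.1}) is the right reading and does not change the argument.
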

In the next result we describe a series expansion associated with a fractional Riemann-Liouville slice regular function:
\begin{proposition}\label{Fract1.4.1}
Suppose there exists $r> 0$ such that $a+r < b $ and let $f\in {}_{RL}\mathcal{SR}^{(\alpha, \beta)}(S_{a-r , b, c  }).$
%\begin{enumerate}
%\item
If
  $$ \sum_{n=1}^{\infty}  \sum_{k=1}^{n-1} \left(  \begin{array}{c} n \\ k \end{array}  \right)
   \frac{\Gamma (n-k+1)\Gamma (k+1)}{\Gamma( n-k+\alpha_0)\Gamma(k+\alpha_1)}
   (x-a)^{n-k }
   y^{k} {\bf i}^k a_n , $$
   $$  \sum_{n=1}^{\infty}
 \frac{\Gamma (n+1)}{\Gamma(n+\alpha_0)} (x-a)^{n}
  a_n,  \ \  \textrm{and}  \  \  \  \sum_{n=1}^{\infty}
   \frac{\Gamma (n+1)}{\Gamma(n+\alpha_1)}y^{n} {\bf i}^n a_n$$

\noindent
are uniformly convergent series on $\mathbb B^4(a, r)$, then  we have
  \begin{align*}
&  \frac{
   (x-a)^{1-\alpha_0-{\bf i}\alpha_1} }{\Gamma( \beta_0 +{\bf i}\beta_1)}  f\mid_{S_{a-r , b,c,{\bf i} }} (x+{\bf i}v) +
   \frac{y^{1-\beta_0 -{\bf i}\beta_1}
   }{\Gamma( \alpha_0+{\bf i}\alpha_1)} f\mid_{S_{a-r , b, c, {\bf i} }}(u+{\bf i}y )  \\
 =&  \sum_{n=0}^{\infty}  \sum_{k=0}^{n}
  \lambda_{k,n,\alpha,\beta}  \, (x-a)^{n-k}
    y^{k-1} {\bf i}^k a_n ,
\end{align*}
	
\noindent
for any $x+{\bf i}y\in \mathbb B^4(a, r)$, where
\begin{align*}
 \lambda_{k,n,\alpha,\beta}:= & \left(  \begin{array}{c} n \\ k \end{array}  \right)
  \frac{\Gamma (n-k+1)}{\Gamma(n-k+\alpha_0+{\bf i}\alpha_1)}  \frac{\Gamma (k+1)}{\Gamma(k+\beta_0 + {\bf i}\beta_1)}\\
  =&  \frac{\Gamma(n+1)} {\Gamma(n-k+\alpha_0+{\bf i}\alpha_1)  \Gamma(k+\beta_0 + {\bf i}\beta_1)} .
  \end{align*}
%\item

%\end{enumerate}
\end{proposition}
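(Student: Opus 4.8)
The plan is to reduce everything to the power series representation of slice regular functions (Theorem recalled in the excerpt) combined with the explicit action of the Riemann--Liouville fractional integral/derivative on monomials $(x-a)^\beta$ recorded in Proposition~\ref{Prop2.2}. By Proposition~\ref{FracProp2}, since $f\in{}_{RL}\mathcal{SR}^{(\alpha,\beta)}(S_{a-r,b,c})$, the auxiliary mapping
\[
g(x+{\bf i}y):=[{\bf I}_{a^+}^{1-\alpha_0-{\bf i}\alpha_1} f\mid_{S_{a-r,b,c,{\bf i}}}](x+{\bf i}v)+[{\bf I}_{0^+}^{1-\beta_0-{\bf i}\beta_1} f\mid_{S_{a-r,b,c,{\bf i}}}](u+{\bf i}y)
\]
is slice regular on $S_{a-r,b,c}$; in particular its restriction to each slice is holomorphic, and on the ball $\mathbb B^4(a,r)\subset S_{a-r,b,c}$ it admits a power series expansion centered at the real point $a$, namely $g(q)=\sum_{n=0}^\infty (q-a)^n a_n$ for a suitable sequence $(a_n)\subset\mathbb H$ (this is the series representation theorem applied after the real translation $q\mapsto q-a$). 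The first real step is therefore to justify this expansion and to identify the coefficients $a_n$ as the coefficients appearing in the statement.

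Next I would apply the fractional operators $ {_{RL}}D_{0^+}^{1-\alpha_0-{\bf i}\alpha_1}$ and $ {_{RL}}D_{a^+}^{1-\beta_0-{\bf i}\beta_1}$ (acting in the $y$ and the $x$ variables respectively) to $g$ and use the Fundamental Theorem for Riemann--Liouville calculus, equation~\eqref{FundTheorem}, to recover $f\mid_{S_{a-r,b,c,{\bf i}}}$ up to the explicit prefactors $\dfrac{(x-a)^{1-\alpha_0-{\bf i}\alpha_1}}{\Gamma(\beta_0+{\bf i}\beta_1)}$ and $\dfrac{y^{1-\beta_0-{\bf i}\beta_1}}{\Gamma(\alpha_0+{\bf i}\alpha_1)}$; this is the same mechanism already used in Proposition~\ref{Fract1.3.1}. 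On the other side, I would expand $q^n=(x+{\bf i}y)^n$ on the slice $\mathbb C({\bf i})$ by the binomial theorem (legitimate since $x$ and ${\bf i}y$ commute inside $\mathbb C({\bf i})$), obtaining
\[
(q-a)^n=\sum_{k=0}^n \binom{n}{k}(x-a)^{n-k}\,({\bf i}y)^k=\sum_{k=0}^n \binom{n}{k}(x-a)^{n-k}y^k{\bf i}^k,
\]
and then apply Proposition~\ref{Prop2.2} termwise: ${_{RL}}D^{\gamma}$ sends $(x-a)^{n-k}$ to $\dfrac{\Gamma(n-k+1)}{\Gamma(n-k+1-\gamma)}(x-a)^{n-k-\gamma}$ and similarly $y^k\mapsto \dfrac{\Gamma(k+1)}{\Gamma(k+1-\gamma)}y^{k-\gamma}$. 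Matching exponents and collecting the Gamma factors produces exactly $\lambda_{k,n,\alpha,\beta}$ and the shifted powers $(x-a)^{n-k}y^{k-1}{\bf i}^k$; the prefactors $(x-a)^{1-\alpha_0-{\bf i}\alpha_1}$ and $y^{1-\beta_0-{\bf i}\beta_1}$ cancel the noninteger parts of the exponents, leaving the clean double series in the statement.

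The delicate point — and the reason the three separate uniform convergence hypotheses are imposed — is the interchange of the fractional operators with the infinite sum. Fractional integration is an integral operator, so dominated convergence on $[a,x]$ (resp.\ $[0,y]$) suffices to pass ${\bf I}$ inside the sum; the subsequent ordinary differentiation (the $\frac{d}{dx}$ or $\frac{d}{dy}$ in the definition of ${_{RL}}D$) requires uniform convergence of the differentiated series, which is precisely what the three displayed series encode: the ``mixed'' double series governs the cross terms $(x-a)^{n-k}y^k{\bf i}^k$ with $1\le k\le n-1$, while the two single series handle the pure $x$-power terms ($k=0$) and the pure $y$-power terms ($k=n$) separately, since these are the boundary cases of the binomial expansion where only one of the two fractional operators acts nontrivially. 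Once these interchanges are granted, the computation is a bookkeeping of Gamma functions. I would close by noting that the identity $\binom{n}{k}\Gamma(n-k+1)\Gamma(k+1)=\Gamma(n+1)$ gives the second, compact form of $\lambda_{k,n,\alpha,\beta}$. The main obstacle is thus not conceptual but the careful verification of termwise applicability of the Riemann--Liouville derivative to the series, together with tracking the ${\bf i}$-dependence of all the Gamma arguments, which do not commute out of the quaternionic coefficients but cause no trouble because they all lie in $\mathbb C({\bf i})$, the same slice carrying ${\bf i}^k$.
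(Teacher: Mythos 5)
Your proposal follows essentially the same route as the paper: expand the auxiliary slice regular map $g$ (from Proposition \ref{FracProp2}) in a power series centered at $a$, binomially expand $(q-a)^n$ on each slice, apply the composed Riemann--Liouville derivatives termwise via the Fundamental Theorem \eqref{FundTheorem} and the monomial formula \eqref{cte}, and invoke the three uniform convergence hypotheses to justify the interchange of the operators with the sum (which you in fact justify more explicitly than the paper does). The only caveat is notational: the operators to compose are ${_{RL}}D_{a^+}^{1-\alpha_0-{\bf i}\alpha_1}$ in $x$ and ${_{RL}}D_{0^+}^{1-\beta_0-{\bf i}\beta_1}$ in $y$ (so that each one inverts the matching fractional integral); your write-up swaps the $\alpha$ and $\beta$ superscripts, mirroring a slip already present in Proposition \ref{Fract1.3.1}.
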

\begin{proof}
%\begin{enumerate}
%\item

\noindent
The mapping
$$q=x+{\bf i} y \mapsto  [{\bf I}_{a^+}^{1-\alpha_0- {\bf i}\alpha_1} f\mid_{S_{a-r , b,c,{\bf i} }}](x+{\bf i}v ) +
[{\bf I}_{0^+}^{1-\beta_0- {\bf i}\beta_1} f\mid_{S_{a-r , b, c, {\bf i} }}](u+{\bf i}y)$$
belongs to $ \mathcal{SR}^{\alpha}(S_{a-r , b, c })$ and its expansion in power series in a neighborhood of the point $a$ is
\begin{align*}
& [{\bf I}_{a^+}^{1-\alpha_0-{\bf i}\alpha_1} f\mid_{S_{a-r , b,c,{\bf i} }}](x+{\bf i}v ) +
[{\bf I}_{0^+}^{1-\beta_0 - {\bf i}\beta_1} f\mid_{S_{a -r , b, c, {\bf i} }}](u+{\bf i}y ) \\
= &  \sum_{n=0}^{\infty}  (x-a+{\bf i} y)^n a_n =   \sum_{n=0}^{\infty}  \sum_{k=0}^n \left(  \begin{array}{c} n \\ k \end{array}  \right) (x-a)^{n-k}  y^k{\bf i}^k a_n \\
=  & a_0  +  \sum_{n=1}^{\infty}  \sum_{k=1}^{n-1} \left(  \begin{array}{c} n \\ k \end{array}  \right) (x-a)^{n-k}  y^k{\bf i}^k a_n
 +  \sum_{n=1}^{\infty}     (x-a)^{n-k}    a_n
  +  \sum_{n=1}^{\infty}     y^n{\bf i}^n a_n ,
\end{align*}
	
\noindent
where $(a_n) $ is a sequence of quaternions.

\noindent
After the action of ${_{RL}}D_{0^+}^{1-\beta_0 - {\bf i}\beta_1}  \circ {_{RL}}D_{a^+}^{1-\alpha_0- {\bf i}\alpha_1}$ and \eqref{cte} we obtain that
 \begin{align*}
&  \frac{y^{\beta_0 +{\bf i}\beta_1-1}}{\Gamma( \beta_0 +{\bf i}\beta_1)}  f\mid_{S_{a , b,c,{\bf i} }} (x+{\bf i}v  ) +
   \frac{(x-a)^{\alpha_0 +{\bf i}\alpha_1-1}}{\Gamma( \alpha_0 +{\bf i}\alpha_1 )} f\mid_{S_{a , b, c, {\bf i} }}(u+{\bf i}y  )  \\
=
 & a_0
 {_{RL}}D_{0^+}^{1-\beta_0 -{\bf i}\beta_1}[1]   {_{RL}}D_{a^+}^{1-\alpha_0- {\bf i}\alpha_1}[1]
  \\
  & +      \sum_{n=1}^{\infty}  \sum_{k=1}^{n-1} \left(  \begin{array}{c} n \\ k \end{array}  \right)  {_{RL}}D_{a^+}^{1-\alpha_0-{\bf i}\alpha_1}  [(x-a)^{n-k}]
    {_{RL}}D_{0^+}^{1-\beta_0 -{\bf i}\beta_1}[    y^k] {\bf i}^k a_n  \\
& +  \sum_{n=1}^{\infty}    {_{RL}}D_{a^+}^{1-\alpha_0-{\bf i}\alpha_1}  [(x-a)^{n}]  a_n
  +  \sum_{n=1}^{\infty}    {_{RL}}D_{0^+}^{1-\beta_0 -{\bf i}\beta_1}[    y^n] {\bf i}^n a_n ,
\end{align*}

\noindent
for $x   >  a $ and $y > 0$, where we have used the uniform convergence in a neighborhood of $a$.

\noindent
Using formula \eqref{cte} in \cite{VTRMB} we obtain that

\[\begin{split}
\frac{y^{\beta_0 +{\bf i}\beta_1-1}}{\Gamma( \beta_0 +{\bf i}\beta_1)} &  f\mid_{S_{a-r , b,c,{\bf i} }} (x+{\bf i}v) +
   \frac{(x-a)^{\alpha_0+{\bf i}\alpha_1-1}}{\Gamma( \alpha_0+{\bf i}\alpha_1)} f\mid_{S_{a-r , b, c, {\bf i} }}(u+{\bf i}y' )  = \\
 & a_0  \frac{(x-a)^{ \alpha_0+ {\bf i}\alpha_1 -1}}{\Gamma(\alpha_0+{\bf i}\alpha_1)}  \frac{y^{ \beta_0 +{\bf i}\beta_1-1}}{\Gamma(\beta_0 +{\bf i}\beta_1)}
   \\
   &   +      \sum_{n=1}^{\infty}  \sum_{k=1}^{n-1}
   \left(  \begin{array}{c} n \\ k \end{array}  \right)
   \frac{\Gamma (n-k+1)}{\Gamma(n-k+\alpha_0+{\bf i}\alpha_1)} (x-a)^{n-k-1+ \alpha_0+{\bf i}\alpha_1} \\
    & \times \frac{\Gamma (k+1)}{\Gamma(k+\beta_0+{\bf i}\beta_1)} y^{k-1+ \beta_0 +{\bf i}\beta_1} {\bf i}^k a_n  \\
    &
 + \frac{y^{ \beta_0 +{\bf i}\beta_1-1}}{\Gamma(\beta_0 +{\bf i}\beta_1)} \sum_{n=1}^{\infty}
 \frac{\Gamma (n+1)}{\Gamma(n+\alpha_0+{\bf i}\alpha_1)} (x-a)^{n-1+ \alpha_0+{\bf i}\alpha_1}
  a_n \\
  &    +
  \frac{(x-a)^{ \alpha_0+{\bf i}\alpha_1-1}}{\Gamma(\alpha_0+{\bf i}\alpha_1)}
   \sum_{n=1}^{\infty}
   \frac{\Gamma (n+1)}{\Gamma(n+\beta_0+{\bf i}\beta_1)} y^{n-1+ \beta_0 +{\bf i}\beta_1} {\bf i}^n a_n .
\end{split}\]

\noindent
Therefore
\[\begin{split}
\frac{y^{\beta_0 +{\bf i}\beta_1-1}}{\Gamma( \beta_0 +{\bf i}\beta_1)} &  f\mid_{S_{a-r , b,c,{\bf i} }} (x+{\bf i}v) +
   \frac{(x-a)^{\alpha_0+{\bf i}\alpha_1-1}}{\Gamma( \alpha_0+{\bf i}\alpha_1)} f\mid_{S_{a-r , b, c, {\bf i} }}(u+{\bf i}y )  = \\
 & a_0  \frac{(x-a)^{ \alpha_0+ {\bf i}\alpha_1 -1}}{\Gamma(\alpha_0+{\bf i}\alpha_1)}  \frac{y^{ \beta_0 +{\bf i}\beta_1-1}}{\Gamma(\beta_0 +{\bf i}\beta_1)}
   \\
   &   +      \sum_{n=1}^{\infty}  \sum_{k=1}^{n-1}
   \left(  \begin{array}{c} n \\ k \end{array}  \right)
   \frac{\Gamma (n-k+1)}{\Gamma(n-k+\alpha_0+{\bf i}\alpha_1)}  \frac{\Gamma (k+1)}{\Gamma(k+\beta_0 +{\bf i} \beta_1)} \\
   & \times  (x-a)^{n-k-1+ \alpha_0+{\bf i}\alpha_1}
    y^{k-1+ \beta_0 +{\bf i}\beta_1} {\bf i}^k a_n  \\
    &
 + \frac{y^{ \beta_0 +{\bf i}\beta_1-1}}{\Gamma(\beta_0 +{\bf i}\beta_1)} \sum_{n=1}^{\infty}
 \frac{\Gamma (n+1)}{\Gamma(n+\alpha_0+{\bf i}\alpha_1)} (x-a)^{n-1+ \alpha_0+{\bf i}\alpha_1}
  a_n \\
  &    +
  \frac{(x-a)^{ \alpha_0+{\bf i}\alpha_1-1}}{\Gamma(\alpha_0+{\bf i}\alpha_1)}
   \sum_{n=1}^{\infty}
   \frac{\Gamma (n+1)}{\Gamma(n+\beta_0+{\bf i}\beta_1)} y^{n-1+ \beta_0 +{\bf i}\beta_1} {\bf i}^n a_n \\
   & =
       \sum_{n=0}^{\infty}  \sum_{k=0}^{n}
   \left(  \begin{array}{c} n \\ k \end{array}  \right)
   \frac{\Gamma (n-k+1)}{\Gamma(n-k+\alpha_0+ {\bf i}\alpha_1)}  \frac{\Gamma (k+1)}{\Gamma(k+\beta_0 +{\bf i}\beta_1)} \\
   & \times (x-a)^{n-k-1+ \alpha_0+{\bf i}\alpha_1}
    y^{k-1+ \beta_0 +{\bf i}\beta_1} {\bf i}^k a_n .
\end{split}\]
%\item
\end{proof}
\begin{proposition}
\noindent
Let $s > 0$ and  let $${\Delta_q(a,s)}:=
  \{ x + y{\bf i} \  \mid \  (x -a)^2 + y^2 < s^2\}
 \subset  S_{a-r , b,c }.$$ Then, for $q=x+\mathbf{i}_q y\in {\Delta_q(a,s)}$ we have
 {\small
    \begin{align*}
&  \frac{ y^{\beta_0 +{\bf i}\beta_1-1}}{\Gamma( \beta_0 +{\bf i}\beta_1)}  f\mid_{S_{a , b,c,{\bf i} }} (x+{\bf i}v  ) +
   \frac{(x-a)^{\alpha_0+{\bf i}\alpha_1-1}}{\Gamma( \alpha_0+{\bf i}\alpha_1)} f\mid_{S_{a , b, c, {\bf i} }}(u+{\bf i}y  )  \\
=
&
\frac{1}{2\pi }\int_{\partial\Delta_q(a,r) }  \mathcal N_{a}^{(\alpha,\beta)}(\zeta, q ) d\zeta_{{\bf i}_q}
\left\{  [{\bf I}_{a^+}^{1-\alpha_0-{\bf i}\alpha_1} f\mid_{S_{a-r , b,c,{\bf i} }}](\Re \zeta+{\bf i}v ) \right.
\\
& \hspace{2cm}
\left.+
[{\bf I}_{a^+}^{1-\beta_0 -{\bf i}\beta_1} f\mid_{S_{a-r , b, c, {\bf i} }}](u+\Im \zeta \ {\bf i} )  \right\},
\end{align*}
}

\noindent
where $$ \mathcal N_{a}^{(\alpha,\beta)}(\zeta, q ) = \sum_{n=0}^{\infty}\left[  {_{RL}}D_{0^+}^{1-\beta_0 -{\bf i}\beta_1}  \circ {_{RL}}D_{a^+}^{1-\alpha_0-{\bf i}\alpha_1}(q-a)^{   n} \right]
  (\zeta-a)^{-(n+1)}.$$		
\end{proposition}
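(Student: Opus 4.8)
The plan is to reduce the statement to the slice regular Cauchy integral formula \eqref{integral}, applied to the slice regular function attached to $f$ in Proposition \ref{FracProp2}, and then to reproduce the computation in the proof of Proposition \ref{Fract1.4.1}. First I would introduce, for $q=x+{\bf i}y$, the auxiliary function
\[
g(q)=[{\bf I}_{a^+}^{1-\alpha_0-{\bf i}\alpha_1} f\mid_{S_{a-r , b,c,{\bf i} }}](x+{\bf i}v)+[{\bf I}_{0^+}^{1-\beta_0-{\bf i}\beta_1} f\mid_{S_{a-r , b, c, {\bf i} }}](u+{\bf i}y),
\]
which by Proposition \ref{FracProp2} belongs to $\mathcal{SR}(S_{a-r,b,c})$. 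Since $a$ is a real point and $\Delta_q(a,r)=\mathbb B^4(a,r)\subset S_{a-r,b,c}$ is a ball centred at $a$, $g$ admits on $\Delta_q(a,r)$ a convergent expansion $g(q)=\sum_{n\ge0}(q-a)^n a_n$, whose coefficients $a_n=g^{(n)}(a)/n!\in\mathbb H$ do not depend on the chosen slice. Applying \eqref{integral} to $g$ on $\Delta_q(a,r)$ and inserting the expansion $S^{-1}(\zeta,q)=\sum_{n\ge0}(q-a)^n(\zeta-a)^{-(n+1)}$, valid uniformly for $\zeta\in\partial\Delta_q(a,r)$ and $\|q-a\|<\|\zeta-a\|$, uniqueness of the power series gives
\[
a_n=\frac{1}{2\pi}\int_{\partial\Delta_q(a,r)}(\zeta-a)^{-(n+1)}\,d\zeta_{{\bf i}_q}\,g(\zeta),\qquad n\ge0,
\]
so that $g(q)=\sum_{n\ge0}(q-a)^n\left(\frac{1}{2\pi}\int_{\partial\Delta_q(a,r)}(\zeta-a)^{-(n+1)}\,d\zeta_{{\bf i}_q}\,g(\zeta)\right)$.

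Next I would apply the operator $L:={_{RL}}D_{0^+}^{1-\beta_0-{\bf i}\beta_1}\circ{_{RL}}D_{a^+}^{1-\alpha_0-{\bf i}\alpha_1}$ to both sides. On the left, the Fundamental Theorem \eqref{FundTheorem} (used in the variable $x$ on the first summand of $g$, and in $y$ on the second) together with $({_{RL}}D^{\gamma}1)(t)=t^{-\gamma}/\Gamma(1-\gamma)$ from \eqref{cte} (the first summand being independent of $y$, the second of $x$) yields, exactly as in the proof of Proposition \ref{Fract1.4.1},
\[
(Lg)(q)=\frac{y^{\beta_0+{\bf i}\beta_1-1}}{\Gamma(\beta_0+{\bf i}\beta_1)}\,f\mid_{S_{a , b,c,{\bf i} }}(x+{\bf i}v)+\frac{(x-a)^{\alpha_0+{\bf i}\alpha_1-1}}{\Gamma(\alpha_0+{\bf i}\alpha_1)}\,f\mid_{S_{a , b, c, {\bf i} }}(u+{\bf i}y)
\]
for $x>a$ and $y>0$, i.e. the left-hand side of the claimed identity. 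On the right, since $L$ is quaternionic right-linear (so the $a_n$ stay to the right) and $L[(q-a)^n]$ is independent of $\zeta$, I would commute $L$ first with the series and then, as a left factor, with the contour integral, obtaining $\frac{1}{2\pi}\int_{\partial\Delta_q(a,r)}\bigl(\sum_{n\ge0}[L(q-a)^n](\zeta-a)^{-(n+1)}\bigr)\,d\zeta_{{\bf i}_q}\,g(\zeta)$; the inner series is precisely $\mathcal N_{a}^{(\alpha,\beta)}(\zeta,q)$, and identifying $g(\zeta)$, read on the slice $\mathbb C({\bf i})$, with the expression in braces in the statement and equating the two computations finishes the argument.

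The step I expect to be the main obstacle is the double interchange just invoked. Because the Riemann--Liouville derivatives are nonlocal, differentiating the power series of $g$ term by term and commuting $L$ with the Cauchy contour integral both rest on a uniform-convergence estimate — precisely the hypothesis, carried over from Proposition \ref{Fract1.4.1}, that the series obtained after the termwise action of $L$ converges uniformly on $\mathbb B^4(a,r)$ — and once this is granted the verification is routine. The non-commutativity causes no trouble: right-linearity of $L$ keeps the coefficients $a_n$ on the right, and $L[(q-a)^n]$ enters the $\zeta$-integral as a left factor; and the coexistence of the two imaginary units ${\bf i}$ and ${\bf i}_q$ is harmless, since the $a_n$ are intrinsic, so the Cauchy representation of the coefficients may be read on the slice $\mathbb C({\bf i}_q)$ while the expansion of $g$ and the operator $L$ are read on $\mathbb C({\bf i})$. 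Finally, convergence of the kernel series $\mathcal N_{a}^{(\alpha,\beta)}(\zeta,q)$ for $q$ in the interior of $\Delta_q(a,r)$ and $\zeta$ on its boundary follows from $\|q-a\|<\|\zeta-a\|$ and the polynomial-in-$n$ growth of the coefficients $\lambda_{k,n,\alpha,\beta}$ of Proposition \ref{Fract1.4.1}.
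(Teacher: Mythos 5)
Your proposal is correct and follows essentially the same route as the paper: represent the slice regular function $g$ attached to $f$ (via Proposition \ref{FracProp2}) by the Cauchy formula \eqref{integral} on $\partial\Delta_q(a,r)$, expand $S^{-1}(\zeta-a,q-a)=\sum_{n\ge0}(q-a)^n(\zeta-a)^{-(n+1)}$, and apply ${_{RL}}D_{0^+}^{1-\beta_0-{\bf i}\beta_1}\circ{_{RL}}D_{a^+}^{1-\alpha_0-{\bf i}\alpha_1}$ termwise, using \eqref{FundTheorem} and \eqref{cte} on the left. Your detour through the explicit coefficients $a_n$ is only a cosmetic variant of the paper's direct substitution of the kernel series into the integral, and your explicit flagging of the uniform-convergence hypothesis needed for the two interchanges is, if anything, more careful than the paper's own proof.
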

\begin{proof}

\noindent
Let $q=x+{\bf i}_qy\in \overline{\Delta_q(a,s)} \subset  S_{a-r , b,c }.$ Then
{\small
\begin{align*}
& [{\bf I}_{a^+}^{1-\alpha_0-{\bf i}\alpha_1} f\mid_{S_{a-r , b,c,{\bf i}_q }}](x+{\bf i}_q v ) +
[{\bf I}_{0^+}^{1-\beta_0 -{\bf i}\beta_1} f\mid_{S_{a-r , b, c, {\bf i}_q }}](u+{\bf i}_q y ) \\
=
&
\frac{1}{2\pi }\int_{\partial\Delta_q(a,r) } S^{-1}(\zeta,q) d\zeta_{{\bf i}_q}
\left\{  [{\bf I}_{a^+}^{1-\alpha_0-{\bf i}\alpha_1} f\mid_{S_{a-r , b,c,{\bf i} }}](\Re \zeta+{\bf i}v ) \right.  \\
& \hspace{2cm} \left.+
[{\bf I}_{0^+}^{1-\beta_0 - {\bf i}\beta_1} f\mid_{S_{a-r , b, c, {\bf i} }}](u+\Im \zeta \ {\bf i} )  \right\},
\end{align*}
}
%\end{enumerate}

\noindent
 {where $\zeta=u+{\bf i } v$.}

\noindent
Recall that $a\in \mathbb R$ and $q$ is the quaternionic variable in which the slice regularity is expressed. Due to the uniform convergence of
\begin{align*}
 S^{-1}(\zeta -a,q-a) = \sum_{n=0}^{\infty} (q-a)^{ n}\ (\zeta-a)^{-(n+1)},\quad \forall \zeta\in  \partial\Delta_q(a,r)  \cap \mathbb C({\mathbf i}_q),
\end{align*}

\noindent
we have that
{\small
\begin{align*}
& [{\bf I}_{a^+}^{1-\alpha_0-{\bf i}\alpha_1} f\mid_{S_{a-r , b,c,{\bf i}_q }}](x+{\bf i}_q v ) +
[{\bf I}_{0^+}^{1-\beta_0 -{\bf i}\beta_1} f\mid_{S_{a-r , b, c, {\bf i}_q }}](u+{\bf i}_q y ) \\
=
&
\sum_{n=0}^{\infty} \frac{1}{2\pi }\int_{\partial\Delta_q(a,r) }
(q-a)^{   n}   (\zeta-a)^{-(n+1)}  d\zeta_{{\bf i}_q}
\left\{  [{\bf I}_{a^+}^{1-\alpha_0-{\bf i}\alpha_1 } f\mid_{S_{a-r , b,c,{\bf i} }}](\Re \zeta+{\bf i}v ) \right.  \\
& \hspace{2cm} \left.+
[{\bf I}_{0^+}^{1-\beta_0-{\bf i}\beta_1} f\mid_{S_{a-r , b, c, {\bf i} }}](u+\Im \zeta \ {\bf i} )  \right\}\\
=
&
\sum_{n=0}^{\infty} (q-a)^{   n}  \frac{1}{2\pi }\int_{\partial\Delta_q(a,r) }
  (\zeta-a)^{- (n+1)}  d\zeta_{{\bf i}_q}
\left\{  [{\bf I}_{a^+}^{1-\alpha_0-{\bf i}\alpha_1} f\mid_{S_{a-r , b,c,{\bf i} }}](\Re \zeta+{\bf i}v ) \right.  \\
& \hspace{2cm} \left.+
[{\bf I}_{0^+}^{1-\beta_0-{\bf i}\beta_1} f\mid_{S_{a-r , b, c, {\bf i} }}](u+\Im \zeta \ {\bf i} )  \right\}.
\end{align*}
}
%}
%\end{enumerate}

\noindent
Acting on  both sides  ${_{RL}}D_{0^+}^{1-\alpha_1}  \circ {_{RL}}D_{a^+}^{1-\alpha_0}$  we get that
{\small
\begin{align*}
&  \frac{y^{\beta_0+{\bf i}\beta_1-1}}{\Gamma( \beta_0+{\bf i}\beta_1)}  f\mid_{S_{a , b,c,{\bf i} }} (x+{\bf i}v  ) +
   \frac{(x-a)^{\alpha_0+{\bf i}\alpha_1-1}}{\Gamma( \alpha_0+{\bf i}\alpha_1 )} f\mid_{S_{a , b, c, {\bf i} }}(u+{\bf i}y  )  \\
=
&
\sum_{n=0}^{\infty} \left[ {_{RL}}D_{0^+}^{1-\beta_0 -{\bf i}\beta_1}  \circ {_{RL}}D_{a^+}^{1-\alpha_0-{\bf i}\alpha_1}(q-a)^{   n}  \right]   \frac{1}{2\pi }\int_{\partial\Delta_q(a,r) }
  (\zeta-a)^{-(n+1)}  d\zeta_{{\bf i}_q} \\
& \left\{  [{\bf I}_{a^+}^{1-\alpha_0-{\bf i}\alpha_1} f\mid_{S_{a-r , b,c,{\bf i} }}](\Re \zeta+{\bf i}v )  +
[{\bf I}_{0^+}^{1-\beta_0 - {\bf i}\beta_1} f\mid_{S_{a-r , b, c, {\bf i} }}](u+\Im \zeta \ {\bf i} )  \right\}
\\
=
& \frac{1}{2\pi }\int_{\partial\Delta_q(a,r) }
\sum_{n=0}^{\infty}\left[  {_{RL}}D_{0^+}^{1-\beta_0 -{\bf i}\beta_1}  \circ {_{RL}}D_{a^+}^{1-\alpha_0-{\bf i}\alpha_1}(q-a)^{   n} \right]
  (\zeta-a)^{-(n+1)}  d\zeta_{{\bf i}_q} \\
& \left\{  [{\bf I}_{a^+}^{1-\alpha_0-{\bf i}\alpha_1} f\mid_{S_{a-r , b,c,{\bf i} }}](\Re \zeta+{\bf i}v )  +
[{\bf I}_{0^+}^{1-\beta_0 -{\bf i}\beta_1} f\mid_{S_{a-r , b, c, {\bf i} }}](u+\Im \zeta \ {\bf i} )  \right\}.
\end{align*}
}
%}
%\end{enumerate}

\end{proof}

\begin{remark} Given $n\in \mathbb N \cup \{0\}$. By direct computations we obtain that
\[\begin{split}
{_{RL}}D_{0^+}^{1-\beta_0+{\bf i} \beta_1} &   \circ {_{RL}}D_{a^+}^{1-\alpha_0-{\bf i}\alpha_1}(q-a)^{   n}  = \\
&
     \sum_{k=0}^{n}\left( \begin{array}{c}   n \\ k \end{array} \right)
              \frac{ \Gamma (k+1) (x-a)^{k+ \alpha_0 +{\bf i}\alpha_1 -1} }{ \Gamma(k+\alpha_0) } \\
              & \times  \frac{ \Gamma (n-k+1) y^{n-k+ \beta_0+{\bf i}\beta_1 -1}{\bf i}^{n-k} }{ \Gamma(n-k+\beta_0 +{\bf i}\beta_1) }\\
              & =
     \sum_{k=0}^{n}\left( \begin{array}{c}   n \\ k \end{array} \right)
              \frac{ \Gamma (k+1)  \Gamma (n-k+1)  }{ \Gamma(k+\alpha_0+{\bf i}\alpha_1) \Gamma(n-k+\beta_0 +{\bf i}\beta_1)}  \\
              & \times  (x-a)^{k+ \alpha_0 +{\bf i}\alpha_1-1} y^{n-k+ \beta_0+{\bf i}\beta_1 -1}{\bf i}^{n-k}  \\
              & =
  (x-a)^{ \alpha_0+{\bf i}\alpha_1 -1} y^{ \beta_0 +{\bf i}\beta_1 -1}    \\
  & \times  \sum_{k=0}^{n}\left( \begin{array}{c}   n \\ k \end{array} \right)
              \frac{ \Gamma (k+1)  \Gamma (n-k+1)  }{ \Gamma(k+\alpha_0+{\bf i} \alpha_1) \Gamma(n-k+\beta_0 +{\bf i}\beta_1)} \\
  & \times  (x-a)^{k} y^{n-k} {\bf i}^{n-k}.
\end{split}\]

\noindent
Therefore,
\[\begin{split}
\mathcal N_{a}^{(\alpha,\beta)}(\zeta, q ) & = (x-a)^{ \alpha_0 +{\bf i}\alpha_1-1} y^{ \beta_0 +{\bf i}\beta_1 -1}  \sum_{n=0}^{\infty}       \sum_{k=0}^{n} \\
& \times   \frac{\Gamma(n+1)  }{ \Gamma(k+\alpha_0+{\bf i}\alpha_1) \Gamma(n-k+\beta_0 +{\bf i}\beta_1)}     (x-a)^{k} y^{n-k} {\bf i}^{n-k}.
\end{split} \]
\end{remark}
\section{Fractional slice regular functions in the Caputo sense}

With the notations introduced in the previous section, and partially relying on the same type of reasonings, in this section we study fractional slice regular functions in the Caputo sense.
\begin{definition}
The fractional derivatives in the Caputo sense on the left and on the right associated to the slice $\mathbb C({\bf i})$    {and with  order induced by $(\alpha,\beta)$} are given by:

\begin{align}\label{SRFracDerCaputo}
& ({_{{\mathcal C}}}D _{a^+,{\bf i} }^{(\alpha,\beta)} f\mid_{{S_{a , b, c, {\bf i} }}})(x+{\bf i}y , u,v):=
\nonumber \\
 &  [{\bf I}_{a^+}^{1-\alpha_0-{\bf i}\alpha_1}\frac{\partial}{\partial x} f\mid_{S_{a , b,c,{\bf i} }}](x+{\bf i}v) + {\bf i}
[{\bf I}_{0^+}^{1-\beta_0-{\bf i}\beta_1 } \frac{\partial}{\partial y}f\mid_{S_{a , b, c, {\bf i} }}](u+{\bf i}y) \end{align}
and
\begin{align} \label{SRFracDer1Caputo}
&({_{{\mathcal C}}}D _{b^-, {\bf i}}^{(\alpha,\beta)} f\mid_{{S_{a , b, c, {\bf i} }}})(x+{\bf i} y, u,v):=
\nonumber \\
& (-1)\left\{ [{\bf I}_{b^-}^{1-\alpha_0-{\bf i}\alpha_1} \frac{\partial}{\partial x}f\mid_{S_{a , b,c,{\bf i} }}](x+{\bf i}v)
 + {\bf i}  [{\bf I}_{c^-}^{1-\beta_0-{\bf i}\beta_1} \frac{\partial}{\partial y} f\mid_{S_{a , b, c, {\bf i} }}](u+{\bf i}y)
\right\},
\end{align}
respectively.
\end{definition}
The right version of the previous fractional derivatives  associated to the slice $\mathbb C({\bf i})$ are given by
\begin{align*}
& ({_{{\mathcal C}}}D _{a^+,{\bf i},r}^{(\alpha,\beta)} f\mid_{{S_{a , b, c, {\bf i} }}})(x+{\bf i}y , u,v):=
\nonumber \\
 &  [{\bf I}_{a^+}^{1-\alpha_0-{\bf i}\alpha_1}\frac{\partial}{\partial x} f\mid_{S_{a , b,c,{\bf i} }}](x+{\bf i}v) +
[{\bf I}_{0^+}^{1-\beta_0-{\bf i}\beta_1 } \frac{\partial}{\partial y}f\mid_{S_{a , b, c, {\bf i} }}](u+{\bf i}y)  {\bf i} \end{align*}
and
\begin{align*}
&({_{{\mathcal C}}}D _{b^-, {\bf i},r}^{(\alpha,\beta)} f\mid_{{S_{a , b, c, {\bf i} }}})(x+{\bf i} y, u,v):=
\nonumber \\
& (-1)\left\{ [{\bf I}_{b^-}^{1-\alpha_0-{\bf i}\alpha_1} \frac{\partial}{\partial x}f\mid_{S_{a , b,c,{\bf i} }}](x+{\bf i}v)
 +   [{\bf I}_{c^-}^{1-\beta_0-{\bf i}\beta_1} \frac{\partial}{\partial y} f\mid_{S_{a , b, c, {\bf i} }}](u+{\bf i}y) {\bf i}
\right\} .
\end{align*}
\begin{definition}
Let $f\in AC^1(S_{a , b,c }, \mathbb H)$ such that the mappings (\ref{m1}) and (\ref{m2}) belong to $C^1([a , b], \mathbb H)$  and  $C^1([0 , c], \mathbb H)$, respectively. Then  $f$ is called a fractional slice regular function in the Caputo sense on $S_{a , b,c }$ of order $(\alpha, \beta)$ if
 \begin{align*}
 & ({_{{\mathcal C}}}D _{a^+,{\bf i} }^{(\alpha,\beta)} f\mid_{{S_{a , b, c, {\bf i} }}})(x+{\bf i}y , u,v) =0 ,\quad \textrm{on}
 \  \  {S_{a , b, c, {\bf i} }},
\end{align*}
for all  ${\bf i}\in \mathbb S^2$.

If
 \begin{align*}
& ({_{{\mathcal C}}}D _{a^+,{\bf i},r }^{(\alpha,\beta)} f\mid_{{S_{a , b, c, {\bf i} }}})(x+{\bf i}y , u,v) =0 ,\quad \textrm{on} \  \
 {S_{a , b, c, {\bf i} }},
\end{align*}
for all ${\bf i}\in \mathbb S^2$ we shall call $f$ a right-fractional slice regular function of order $(\alpha, \beta)$ on $S_{a , b,c }$ in the Caputo sense.

The quaternionic right-linear space of fractional slice regular functions, in the sense of Caputo of order $(\alpha, \beta)$ on $S_{a , b,c }$, will be denoted by ${}_{{\mathcal C}}\mathcal{SR}^{(\alpha,\beta)}(S_{a , b, c  })$.  {Furthermore, we write ${}_{{\mathcal C}}\mathcal{SR}_r^{(\alpha,\beta)}(S_{a , b, c  })$ for the quaternionic left-linear space of right-fractional slice regular functions in the sense of Caputo of order $(\alpha, \beta)$ on $S_{a , b,c }$.}
 \end{definition}

\begin{proposition}\label{prop53}
Let $f\in AC^1(S_{a , b,c }, \mathbb H)$ such that the mappings (\ref{m1}) and (\ref{m2}) belong to $C^1([a , b], \mathbb H)$ and $C^1([0 , c], \mathbb H)$, respectively. Then
\[\begin{split}
({_{{\mathcal C}}}D _{a^+,{\bf i} }^{(\alpha,\beta)} & f\mid_{{S_{a , b, c, {\bf i} }}})(x+{\bf i}y , u,v)  \nonumber = \\
& ({_{RL}}D _{a^+,{\bf i} }^{(\alpha, \beta)} f\mid_{{S_{a , b, c, {\bf i} }}})(x+{\bf i}y , u,v)   \\
& - \frac{  (x-a)^{\alpha_0+{\bf i}\alpha_1}}{\Gamma(1-\alpha_0-{\bf i}\alpha_1)}   f(a+ {\bf i} v)  \\
& - {\bf i} \frac{  y^{\beta_0 +{\bf i}\beta_1}}{\Gamma(1-\beta_0 -{\bf i} \beta_1)}  f(u)
\end{split}\]
\end{proposition}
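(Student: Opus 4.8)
The plan is to reduce the claimed identity to the one-variable relation \eqref{RLandC}, applied separately, along a fixed slice $\mathbb C({\bf i})$, in the variable $x$ (with base point $a$ and order $\alpha_0+{\bf i}\alpha_1$) and in the variable $y$ (with base point $0$ and order $\beta_0+{\bf i}\beta_1$), and then to recombine the two contributions using the ${\bf i}$-twist already present in \eqref{SRFracDerCaputo} and in Definition \ref{FDQ}. The hypotheses are precisely what this reduction needs: by the definition of $AC^1(S_{a,b,c},\mathbb H)$ the one-variable restrictions $g_v(x):=f\mid_{S_{a,b,c,{\bf i}}}(x+{\bf i}v)$ and $h_u(y):=f\mid_{S_{a,b,c,{\bf i}}}(u+{\bf i}y)$ are of class $AC^1$; the assumptions on \eqref{m1}--\eqref{m2} ensure the Riemann-Liouville derivatives in play exist; and $0<\alpha_0,\beta_0<1$ means that $\alpha_0+{\bf i}\alpha_1$ and $\beta_0+{\bf i}\beta_1$, read as elements of $\mathbb C({\bf i})\cong\mathbb C$, have real part in $(0,1)$, so the one-variable results of Section \ref{1} apply to $g_v$ and $h_u$ (with the $\mathbb C({\bf i})$-valued kernels acting on the left).

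First I would treat the $x$-part. By \eqref{FracDerCaputo} applied to $g_v$, the first summand of \eqref{SRFracDerCaputo} equals $({_{{\mathcal C}}}D_{a^+}^{\alpha_0+{\bf i}\alpha_1}g_v)(x)$, and \eqref{RLandC} rewrites it as $({_{RL}}D_{a^+}^{\alpha_0+{\bf i}\alpha_1}(g_v-g_v(a)))(x)$. Splitting off the constant $g_v(a)=f(a+{\bf i}v)$ by additivity, and using both that a constant quaternion pulls out of the slice fractional integral ${\bf I}_{a^+}^{1-\alpha_0-{\bf i}\alpha_1}$ on the right and the evaluation \eqref{cte}, I obtain
$$\left[{\bf I}_{a^+}^{1-\alpha_0-{\bf i}\alpha_1}\frac{\partial}{\partial x}f\mid_{S_{a,b,c,{\bf i}}}\right](x+{\bf i}v)=\left({_{RL}}D_{a^+}^{\alpha_0+{\bf i}\alpha_1}f\mid_{S_{a,b,c,{\bf i}}}\right)(x+{\bf i}v)-\left({_{RL}}D_{a^+}^{\alpha_0+{\bf i}\alpha_1}1\right)(x)\,f(a+{\bf i}v),$$
the scalar $\left({_{RL}}D_{a^+}^{\alpha_0+{\bf i}\alpha_1}1\right)(x)$ being the one computed in \eqref{cte}.

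Then I would run the very same three steps for $h_u$ in the variable $y$, with base point $0$ and order $\beta_0+{\bf i}\beta_1$, noting that $h_u(0)=f(u)$ lies on the real axis and hence is independent of ${\bf i}$; this gives the corresponding identity for the second summand of \eqref{SRFracDerCaputo}, with $\beta$ in place of $\alpha$, base point $0$ in place of $a$, and $f(u)$ in place of $f(a+{\bf i}v)$. Multiplying this second identity on the left by ${\bf i}$, adding it to the $x$-identity, and invoking Definition \ref{FDQ} to recognise the sum of the two Riemann-Liouville terms as $({_{RL}}D_{a^+,{\bf i}}^{(\alpha,\beta)}f\mid_{S_{a,b,c,{\bf i}}})(x+{\bf i}y,u,v)$ while \eqref{SRFracDerCaputo} identifies the sum of the two left-hand sides as $({_{{\mathcal C}}}D_{a^+,{\bf i}}^{(\alpha,\beta)}f\mid_{S_{a,b,c,{\bf i}}})(x+{\bf i}y,u,v)$, yields the stated formula. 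I expect the only delicate point --- and the step where a careless copy of the scalar computation would break --- to be the non-commutativity of $\mathbb H$: the $\mathbb C({\bf i})$-valued factors supplied by \eqref{cte} and the unit ${\bf i}$ must be kept to the left of the quaternionic values $f(a+{\bf i}v)$ and $f(u)$, exactly as they arise when the constants are extracted from the defining integrals on the correct side. Once this bookkeeping is respected, the remaining manipulations are routine.
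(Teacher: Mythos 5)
Your proof is correct and follows essentially the same route as the paper: apply \eqref{RLandC} separately to the $x$- and $y$-restrictions along the slice, split off the constants $f(a+{\bf i}v)$ and $f(u)$ by right-linearity, and evaluate the resulting Riemann--Liouville derivatives of $1$ via \eqref{cte}, keeping the $\mathbb{C}({\bf i})$-valued factors to the left of the quaternionic values. The only remark worth adding is that a faithful application of \eqref{cte} yields the exponents $-(\alpha_0+{\bf i}\alpha_1)$ and $-(\beta_0+{\bf i}\beta_1)$ in the two correction terms, so the positive exponents appearing in the printed statement (and in the last line of the paper's own proof) seem to be a sign typo that your computation would in fact correct.
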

\begin{proof} Formula \eqref{RLandC} shows that
\[\begin{split}
({_{{\mathcal C}}}D _{a^+,{\bf i} }^{(\alpha,\beta)} & f\mid_{{S_{a , b, c, {\bf i} }}})(x+{\bf i}y , u,v)  \nonumber  = \\
&\hspace{-8mm}   {_{RL}} D _{a^+}^{\alpha_0+{\bf i}\alpha_1} ( \  f (t+{\bf i} v) - f(a+ {\bf i} v) \ ) (x) + {\bf i} {_{RL}} D _{0^+}^{\beta_0 +{\bf i}\beta_1} (\ f (u+{\bf i} t) - f(u) \  ) (y) \\
&\hspace{-8mm} =  ({_{RL}}D _{a^+,{\bf i} }^{(\alpha ,\beta)} f\mid_{{S_{a , b, c, {\bf i} }}})(x+{\bf i}y , u,v)  - {_{RL}} D _{a^+}^{\alpha_0 +{\bf i}\alpha_1} (  f(a+ {\bf i} v) ) (x) \\
&\hspace{-8mm}  -  {\bf i}  {_{RL}} D _{0^+}^{\beta_0 +{\bf i}\beta_1} (f(u) ) (y) \\
&\hspace{-8mm} = ({_{RL}}D _{a^+,{\bf i} }^{(\alpha,\beta)} f\mid_{{S_{a , b, c, {\bf i} }}})(x+{\bf i}y , u,v) - \frac{  (x-a)^{\alpha_0+{\bf i}\alpha_1}}{\Gamma(1-\alpha_0-{\bf i}\alpha_1)}   f(a+ {\bf i} v)   \\
&\hspace{-8mm}  - {\bf i} \frac{  y^{\beta_0 +{\bf i}\beta_1}}{\Gamma(1-\beta_0-{\bf i}\beta_1)}  f(u)
\end{split}\]
and the statement follows.
\end{proof}

\begin{corollary}
Under the hypothesis of Proposition \ref{prop53}, we have that $f\in {}_{{\mathcal C}}\mathcal{SR}^{(\alpha, \beta)}(S_{a , b, c  })$ if and only if
\[\begin{split}({_{RL}}D _{a^+,{\bf i} }^{(\alpha,\beta)} & f\mid_{{S_{a , b, c, {\bf i} }}})(x+{\bf i}y , u,v) = \\
& \frac{  (x-a)^{\alpha_0+{\bf i}\alpha_1}}{\Gamma(1-\alpha_0-{\bf i}\alpha_1)}f(a+ {\bf i} v)  + {\bf i}  \frac{  y^{\beta_0 +{\bf i}\beta_1}}{\Gamma(1-\beta_0-{\bf i}\beta_1)}f(u).
\end{split}\]
In addition, for $f\in AC^1(S_{a , b,c }, \mathbb H)$ such that the mappings (\ref{m1}) and (\ref{m2}) belong to $C^1([a , b], \mathbb H)$ and  $C^1([0 , c], \mathbb H)$, respectively, and also $f(a+ {\bf i} v)  = f(u) =0$ for all ${\bf i}\in \mathbb S^2$ we have that $f\in {}_{{\mathcal C}}\mathcal{SR}^{(\alpha,\beta)}(S_{a , b, c  })$ if and only if $f\in   {}_{{RL}}\mathcal{SR}^{(\alpha,\beta)}(S_{a , b, c})$.
\end{corollary}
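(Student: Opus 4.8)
The plan is to obtain the statement as an immediate consequence of Proposition \ref{prop53}. For the first equivalence, I would start from the definition: $f\in {}_{{\mathcal C}}\mathcal{SR}^{(\alpha,\beta)}(S_{a,b,c})$ means precisely that $({_{{\mathcal C}}}D_{a^+,{\bf i}}^{(\alpha,\beta)} f\mid_{S_{a,b,c,{\bf i}}})(x+{\bf i}y,u,v)=0$ on $S_{a,b,c,{\bf i}}$ for every ${\bf i}\in\mathbb S^2$. Substituting into the left-hand side the identity furnished by Proposition \ref{prop53} and moving the two correction terms $\frac{(x-a)^{\alpha_0+{\bf i}\alpha_1}}{\Gamma(1-\alpha_0-{\bf i}\alpha_1)}f(a+{\bf i}v)$ and ${\bf i}\,\frac{y^{\beta_0+{\bf i}\beta_1}}{\Gamma(1-\beta_0-{\bf i}\beta_1)}f(u)$ to the other side yields exactly the claimed equation; reading the same identity in the opposite direction gives the converse implication. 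No regularity questions arise, since the hypotheses placed on the maps (\ref{m1}) and (\ref{m2}) are carried over verbatim from Proposition \ref{prop53}, so in particular all the fractional operators in play are well defined on $AC^1(S_{a,b,c},\mathbb H)$.

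For the additional statement I would simply note that under the extra assumption $f(a+{\bf i}v)=f(u)=0$ for all ${\bf i}\in\mathbb S^2$ the right-hand side of the equation just obtained vanishes identically. Hence the condition $f\in {}_{{\mathcal C}}\mathcal{SR}^{(\alpha,\beta)}(S_{a,b,c})$ becomes equivalent to $({_{RL}}D_{a^+,{\bf i}}^{(\alpha,\beta)} f\mid_{S_{a,b,c,{\bf i}}})(x+{\bf i}y,u,v)=0$ on $S_{a,b,c,{\bf i}}$ for all ${\bf i}\in\mathbb S^2$, which is by definition the membership $f\in {}_{{RL}}\mathcal{SR}^{(\alpha,\beta)}(S_{a,b,c})$, and this closes the argument.

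There is essentially no serious obstacle here: the entire content of the corollary is already packaged in Proposition \ref{prop53}, and what remains is a one-line rearrangement together with a specialization of the correction terms to zero. The only point that deserves a moment of care is the bookkeeping of quantifiers — one must check that ``for all ${\bf i}\in\mathbb S^2$'' is the quantifier attached both to the defining conditions of the two function classes and to the vanishing hypothesis $f(a+{\bf i}v)=0$, so that the equivalence is first established sliceswise and then holds on all of $S_{a,b,c}$.
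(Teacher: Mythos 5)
Your argument is correct and is exactly the intended one: the paper leaves this corollary without an explicit proof precisely because it is the one-line rearrangement of the identity in Proposition \ref{prop53} that you carry out, followed by the observation that the two correction terms vanish under the extra hypothesis $f(a+{\bf i}v)=f(u)=0$. Your remark about keeping the quantifier ``for all ${\bf i}\in\mathbb S^2$'' attached consistently to both membership conditions and to the vanishing hypothesis is the only point of care, and you handle it properly.
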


\section*{Statements and Declarations}
\subsection*{Funding} This work was partially supported by Instituto Polit\'ecnico Nacional (grant numbers SIP20232103, SIP20230312) and CONACYT.
\subsection*{Competing Interests} The authors declare that they have no competing interests regarding the publication of this paper.
\subsection*{Author contributions} All authors contributed equally to the study, read and approved the final version of the submitted manuscript.
\subsection*{Availability of data and material} Not applicable
\subsection*{Code availability} Not applicable
\subsection*{ORCID}
\noindent
Jos\'e Oscar Gonz\'alez-Cervantes: https://orcid.org/0000-0003-4835-5436\\
Juan Bory-Reyes: https://orcid.org/0000-0002-7004-1794\\
Irene Sabadini: https://orcid.org/0000-0002-9930-4308

\end{document}